\documentclass[print,wide]{draft}

\graphicspath{{images/}}
\usepackage{booktabs,multirow}
\usepackage{placeins}

\begin{document}
\title{Stability Estimates for the Inverse Recovery of Drift and Diffusion Point Sources in Stochastic Parabolic Equations}

\vspace{-2mm}

\author{Qi L\"u\thanks{School of Mathematics, Sichuan University, Chengdu 610064, China. {\small\it E-mail:} {\small\tt lu@scu.edu.cn.} This author is supported by the National Natural Science Foundation of China under grant 12025105.}, \quad
Yu Wang\thanks{School of Mathematics, Southwest Jiaotong University, Chengdu 611756, China. {\small\it E-mail:} {\small\tt yuwangmath@163.com.} This author is supported by the National Natural Science Foundation of China under grant
12401589 and by Sichuan Science and Technology Program under grant
2026NSFSC0777.}}

\date{}
\maketitle

\vspace{-6mm}

\begin{abstract}
This paper studies the simultaneous recovery of point-source locations and temporal strengths in both the drift and diffusion terms of a stochastic parabolic equation with Neumann boundary conditions, using only boundary observations on a nonempty portion of the boundary after the sources have become inactive. By separating the stochastic equation into its expectation and centered parts, the drift channel reduces to a deterministic source problem with scalar Laplace moments, while the diffusion channel is controlled by It\^o's isometry, which preserves the full \(L^2\)-energy of the integrand. Under suitable nondegeneracy and separation assumptions, we establish Lipschitz stability for the location of a single source in either channel, logarithmic stability for the drift strength, and Lipschitz stability for the complete diffusion strength including its sign. For multiple sources, we obtain Lipschitz stability up to permutation for drift locations and weighted temporal moments in dimensions two and three, and for diffusion locations and strengths in dimensions one through three. The proof employs explicit adjoint probes, including complex-isotropic polynomial annihilators in dimensions two and three and spectral synthesis in one dimension, combined with boundary-to-source estimates derived from Green's identities. These results extend and improve upon existing deterministic stability theories by revealing the fundamentally different temporal information carried by stochastic integrals, a distinction that is further illustrated by numerical experiments.
\end{abstract}

{\bf Keywords:} Inverse source problem, stochastic parabolic equation, point sources,  Lipschitz stability 

\vspace{1mm}

{\bf 2020 MR Subject Classification}: 35R30, 35R60 

\vspace{-3mm}

\section{Introduction}
\label{sec:introduction}

Point-source models are used when a localized release, such as in heat generation or pollutant transport, is small compared with the observation length scale. Such models arise, for instance, in pollutant-source identification, where time-dependent point releases are described by advection-dispersion-reaction equations \cite{ElBadiaHaDuongHamdi2005}, and white-noise-modulated sources have been used in diffusion-convection models of turbulent transport \cite{ElTawil1998}. In this context, the drift and diffusion sources represent mean localized releases and zero-mean fluctuations, respectively, about a background concentration. The corresponding inverse problem involves both finite-dimensional geometric unknowns, namely the source locations, and infinite-dimensional temporal unknowns, namely the release histories. In the deterministic setting, this mixed nature already leads to different stability regimes for these two types of unknowns. The goal of this paper is to investigate how this picture changes when both the drift and the Itô diffusion channels contain point sources in a stochastic parabolic equation.

Let $n\in\{1,2,3\}$,  $G\subset\mathbb R^n$ be a bounded connected
domain with boundary of class $C^2$,  $\Gamma\subset\partial G$ be nonempty and
relatively open, and $0<T_0<T_1<T$. 
Set $\Sigma=(0,T)\times\partial G$.
On a complete filtered probability space 
$(\Omega,\mathcal F,\mathbf F,\mathbb P)$ carrying a one-dimensional standard
Brownian motion $\{W(t)\}_{t\geq0}$ with augmented filtration 
$\mathbf F=\{\mathcal F_t\}_{t\geq0}$, let $\mathbb F$ denote the progressive $\sigma$-field associated with
$\mathbf F$. For a separable Hilbert space $\mathcal H$,
$L^2_{\mathbb F}(0,T;\mathcal H)$ denotes the space of progressively
measurable $\mathcal H$-valued processes $X$ such that
$\mathbb E\int_0^T\|X(t)\|_{\mathcal H}^2\,dt<\infty$, and
$L^2_{\mathbb F}(\Omega;C([0,T];\mathcal H))$ denotes the space of adapted
continuous processes satisfying
$\mathbb E\sup_{0\leq t\leq T}\|X(t)\|_{\mathcal H}^2<\infty$.
We use $L^2_{\mathbb F}(\Omega;H^1(T_1,T;\mathcal H))$ for the adapted
processes in $L^2(\Omega;H^1(T_1,T;\mathcal H))$.
Here and throughout, $\nu$ denotes the outward unit normal vector on
$\partial G$.

Fix known integers $N_0,N_1\geq0$ with $N_0+N_1\geq1$, and interpret
an empty sum as zero. We consider\vspace{-2mm}
\begin{equation}
\label{eq:model}
\begin{cases}
\begin{aligned}
 &d u+\mathcal{L} u\,dt
=\sum_{k=1}^{N_0}\lambda_k(t)\delta_{x_k}\,dt
+\sum_{l=1}^{N_1}\gamma_l(t)\delta_{y_l}\,dW(t)
&&\text{in }(0,T)\times G,\\
&\partial_\nu u=0
&&\text{on }(0,T)\times\partial G,\\
&u(0)=u_0
&&\text{in }G,
\end{aligned}
\end{cases}\vspace{-2mm}
\end{equation}
where\vspace{-2mm}
\begin{equation*}
\label{eq:operator}
\mathcal{L}=-\Delta+A\cdot\nabla+\mu,
\qquad A\in\mathbb{R}^n,\quad \mu\in\mathbb{R}.
\end{equation*}
Here $\delta_\xi$ denotes the Dirac measure at $\xi\in G$.
The initial datum $u_0\in L^2(G)$ is deterministic and known. The source
locations $x_k,y_l\in G$ and the deterministic strengths
$\lambda_k,\gamma_l\in L^2(0,T)$ are unknown. 

\vspace{-2mm}

\begin{problem}[Inverse point-source problem]
Determine the source locations and temporal strengths
$\{(x_k,\lambda_k)\}_{k=1}^{N_0}$ and
$\{(y_l,\gamma_l)\}_{l=1}^{N_1}$ from the boundary trace $u|_\Sigma$.
\end{problem}
\vspace{-2mm}

Identification and reconstruction problems for point sources in deterministic PDEs have been extensively studied. Early contributions established uniqueness for stationary sources that vanish prior to the end of the experiment, based on partial lateral boundary data \cite{ElBadiaHaDuong2002}. Subsequent work extended these results to time-dependent intensities in one dimension \cite{ElBadiaHaDuongHamdi2005}, moving sources \cite{AndrleBelgacemElBadia2011}, and multiple moving pollutants \cite{AndrleElBadia2012}. Ling and Takeuchi \cite{LingTakeuchi2009} recovered source counts, locations, and strengths from sparse fixed-point measurements. More recent developments include reconstruction methods for time-dependent emissions \cite{QiuWangYuYu2025}, Dirac sources from sparse data \cite{GuZhangZhang2025}, and sources from single-boundary observations \cite{SunWang2025}; uniqueness from finite flux measurements has also been addressed \cite{GongJinKianLiu2026}. While these results clarify identifiability issues, they do not provide the stability estimates required for error quantification in our context.

For regular sources, Lipschitz stability via Carleman estimates is by now classical \cite{ImanuvilovYamamoto1998}, whereas Dirac singularities demand special treatment. Komornik and Yamamoto \cite{KomornikYamamoto2005} derived sharp conditional estimates for point masses with prescribed temporal profiles. The most directly relevant work for our drift channel is \cite{HuangJinKianTriki2026}, which establishes Lipschitz stability for the location and logarithmic stability for the strength of single sources, as well as planar H\"older estimates for multiple sources under moment/matching assumptions, using Carleman estimates, Laplace analysis, and adjoint elliptic solutions. An optimal-transport framework for positive point-source measures appears in \cite{QiuYu2025} under finite-dimensional temporal normalization. Thus, while the deterministic theory reveals distinct stability scales for locations and histories, it does not capture the additional information carried by stochastic integrals in the diffusion channel.

Stochastic inverse source problems are more recent; see \cite{LuZhang2024,LuWang2026Book}. L\"u \cite{Lu2012} proved uniqueness via Carleman estimates.   \cite{Yuan2021} considered two additive source types. More recently, \cite{WangYangZhang2026} recovered temporal-factor moduli from partial flux, and \cite{LiLuQianWang2026} proved uniqueness of drift/diffusion supports via shape optimization. Despite these advances, quantitative stability for the simultaneous recovery of interior point locations and full temporal strengths in both channels, including signed diffusion profiles and finite-source matching, has not yet been established.

A key ingredient in the analysis below is the assumption that all sources become inactive before the end of the observation period. This switch-off condition is standard in the point-source literature \cite{ElBadiaHaDuong2002,ElBadiaHaDuongHamdi2005,HuangJinKianTriki2026} and is essential for obtaining the terminal-state regularity needed in the stability proof.

\begin{condition}
\label{con1}
The source amplitudes vanish after $T_0$:\vspace{-2mm}
\begin{equation}
\label{eq:switch-off}
\lambda_k(t)=0\quad(k=1,\ldots,N_0),\qquad
\gamma_l(t)=0\quad(l=1,\ldots,N_1),
\quad\text{for a.e. }t\in(T_0,T).\vspace{-2mm}
\end{equation}
\end{condition}

To state the main stability results, we compare two candidate
configurations with the same source counts,\vspace{-2mm}
\begin{equation*}
\{(x_k^j,\lambda_k^j)\}_{k=1}^{N_0},\qquad
\{(y_l^j,\gamma_l^j)\}_{l=1}^{N_1},\qquad j=1,2.\vspace{-2mm}
\end{equation*}
Both configurations satisfy \eqref{eq:switch-off}. Let $u^j$ be the
corresponding solutions with the same initial datum. For $n=1$, boundary
integrals use counting measure; in particular, $H^1(\Gamma)=L^2(\Gamma)$.

We first consider $N_0,N_1\in\{0,1\}$, with at least one channel present.
For each source that is present, we suppress the source index by
writing $x^j=x_1^j$, $y^j=y_1^j$, $\lambda^j=\lambda_1^j$, and
$\gamma^j=\gamma_1^j$ for $j=1,2$.

Quantifying drift-strength stability requires minimal regularity and an energy lower bound. We assume the first reference strength has nonzero 
$L^2$-norm and that the difference of two candidate strengths lies in a suitable Sobolev space for frequency-domain analysis.
\begin{assumption}
\label{ass:lambda}
There exist $s\in(0,\tfrac12)$ and $M,r>0$ such that
$\lambda^1\in H^s(0,T)$, $\lambda^1-\lambda^2\in H_0^1(0,T)$, and\vspace{-2mm}
\begin{equation}
\label{eq:lambda-assumptions}
\|\lambda^1\|_{H^s(0,T)}
+\|\lambda^1-\lambda^2\|_{H^1(0,T)} \leq M,\qquad
\|\lambda^1\|_{L^2(0,T)} \geq r.\vspace{-2mm}
\end{equation}
\end{assumption}

For the diffusion channel, the It\^o isometry permits a direct $L^2$-based estimate of the strength difference, so the required assumptions are milder. It suffices to impose a uniform upper bound on both strengths and a nondegeneracy lower bound on the reference strength.\vspace{-2mm}
\begin{assumption}
\label{ass:gamma}
There exist $M>0$ and $r>0$ such that\vspace{-2mm}
\begin{equation}
\label{eq:gamma-bounds}
\|\gamma^1\|_{L^2(0,T)}\geq r,
\qquad
\|\gamma^1\|_{L^2(0,T)}
+\|\gamma^2\|_{L^2(0,T)}\leq M.\vspace{-2mm}
\end{equation}
\end{assumption}
Define the logarithmic modulus\vspace{-4mm}
\begin{equation}
\label{eq:log-modulus}
\Phi(\eta)=
\begin{cases}
[\log(3+\eta^{-1})]^{-2},&\eta>0,\\
0,&\eta=0.
\end{cases}\vspace{-2mm}
\end{equation}
For a boundary trace $f$ for which the following quantities are finite, set\vspace{-2mm}
\begin{equation*}
\| f \|_{\operatorname{obs}} = 
\|f\|_{L^2_{\mathbb F}(0,T;L^2(\partial G))}
+\|f\|_{L^2_{\mathbb F}(T_1,T;H^1(\Gamma))}
+\|f\|_{L^2_{\mathbb F}(\Omega;H^1(T_1,T;L^2(\Gamma)))}.\vspace{-2mm}
\end{equation*}
For a state $u$, we write
$\|u\|_{\operatorname{obs}}=\|u|_\Sigma\|_{\operatorname{obs}}$.
For deterministic traces, the expectation is omitted.

Throughout Sections~\ref{sec:introduction}--\ref{sec:multiple}, $C$ denotes
a positive constant that may change from line to line. Unless otherwise
stated, its dependence on $G,\Gamma,T,T_0,T_1,A$, and $\mu$ is suppressed.
Any additional dependence is specified in the relevant statement or
locally in the proof.

\begin{theorem}
\label{thm:main-single}
Let $n\in\{1,2,3\}$. Under \cref{con1}, the following statements hold.
\begin{enumerate}[(a)]
\item \emph{Drift source.}
Let $N_0=1$, $x^j\in G$, and suppose that Assumption~\ref{ass:lambda}
holds. There exists $C=C(s,M,r)>0$ such that\vspace{-3mm}
\begin{align}
\label{eq:main-drift-location}
\lvert x^1-x^2\rvert
& \leq C\|u^1-u^2\|_{\operatorname{obs}},
\\
\label{eq:main-drift-amplitude}
\|\lambda^1-\lambda^2\|_{L^2(0,T)}
& \leq C\Phi\bigl(\|u^1-u^2\|_{\operatorname{obs}}\bigr).\vspace{-2mm}
\end{align}

\item \emph{Diffusion source.}
Let $N_1=1$, $y^j\in G$, and suppose that Assumption~\ref{ass:gamma}
holds. There exists $C=C(M,r)>0$ such that\vspace{-2mm}
\begin{equation}
\label{eq:main-diffusion}
\lvert y^1-y^2\rvert
+\|\gamma^1-\gamma^2\|_{L^2(0,T)}
\leq C\|u^1-u^2\|_{\operatorname{obs}}.\vspace{-2mm}
\end{equation}
\end{enumerate}
\end{theorem}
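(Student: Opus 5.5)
We will work throughout with the difference $w=u^1-u^2$ and split it into its expectation and its centered part. Since the initial data coincide, $w(0)=0$, and by linearity
\begin{equation*}
dw+\mathcal L w\,dt=(\lambda^1\delta_{x^1}-\lambda^2\delta_{x^2})\,dt+(\gamma^1\delta_{y^1}-\gamma^2\delta_{y^2})\,dW(t),\qquad \partial_\nu w=0 .
\end{equation*}
Taking expectations, $\bar w=\mathbb E w$ solves a deterministic Neumann parabolic problem driven only by the drift sources. The centered part $\tilde w=w-\bar w$ solves the same equation driven only by the It\^o term, and it has the mild representation $\tilde w(t)=\int_0^t S(t-\tau)\,(\gamma^1\delta_{y^1}-\gamma^2\delta_{y^2})\,dW(\tau)$. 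Moreover $\|\bar w|_\Sigma\|_{\operatorname{obs}}+\|\tilde w|_\Sigma\|_{\operatorname{obs}}\le C\|w\|_{\operatorname{obs}}$, so the two channels decouple and can be treated separately.

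The common tool is an adjoint probe combined with Green's identity. Fix a test function $\varphi$ solving the backward adjoint problem $-\partial_t\varphi+\mathcal L^*\varphi=0$ in $G$, where $\mathcal L^*=-\Delta-A\cdot\nabla+\mu$. The boundary data of $\varphi$ are chosen freely, for instance so that $\varphi$ is supported in $\Gamma$ near the final time, or so that $\varphi=e^{-\kappa(T-t)}\psi(x)$ with $(\mathcal L^*-\kappa)\psi=0$. Pairing with the equation and using $\partial_\nu w=0$ gives an identity of the form
\begin{equation*}
\int_0^T\big(\lambda^1\varphi(t,x^1)-\lambda^2\varphi(t,x^2)\big)dt=\langle \bar w(T),\varphi(T)\rangle+\int_\Sigma \bar w\,\partial_\nu\varphi .
\end{equation*}
The terminal term $\bar w(T)$ is not observed directly. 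We control it as follows. After $T_0$ the equation is source-free, so $\bar w$ is smooth on $(T_1,T)\times G$. A Carleman or unique-continuation estimate for the source-free Neumann problem, applied on $(T_1,T)$ with Cauchy data on $\Gamma$, then bounds $\|\bar w(T)\|$ by the $H^1(T_1,T;L^2(\Gamma))$ and $L^2(T_1,T;H^1(\Gamma))$ parts of the observation norm. This is why those parts appear in $\|\cdot\|_{\operatorname{obs}}$. The same estimate holds in mean square for $\tilde w$ once we take conditional expectations given $\mathcal F_{T_0}$.

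For part (a), we take exponential probes $\varphi=e^{-\kappa(T-t)}\psi_\kappa$, where $\psi_\kappa$ solves $(\mathcal L^*-\kappa)\psi=0$. The identity above then yields Laplace-moment quantities $\hat\lambda^1(\kappa)\psi(x^1)-\hat\lambda^2(\kappa)\psi(x^2)=O(\|w\|_{\operatorname{obs}})$. Choosing $\kappa$ fixed and letting $\psi$ range over $1$ and the linear-type solutions $\psi\approx x_i$ isolates $x^1-x^2$, as follows. The moment with $\psi\approx1$ controls $\hat\lambda^1-\hat\lambda^2$. The moments with $\psi\approx x_i$ then give $\hat\lambda^1(\kappa)(x^1-x^2)$ up to errors. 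The lower bound $\|\lambda^1\|_{L^2}\ge r$ together with the $H^s$ bound guarantees that some $\kappa$ in a fixed compact range has $|\hat\lambda^1(\kappa)|\ge c(s,M,r)$, which gives \eqref{eq:main-drift-location}. With the location controlled, the Laplace transform of $\lambda^1-\lambda^2$ is known with error $C\|w\|_{\operatorname{obs}}$ on an interval of $\kappa$. Inverting the Laplace transform is the main obstacle: it is a classical severely ill-posed analytic-continuation step. We would handle it by a Müntz-type (or Hausdorff-moment) interpolation estimate, which uses the $H^1_0$ bound on $\lambda^1-\lambda^2$ to cut off high frequencies. Balancing the truncation error against the continuation error yields the modulus $[\log(3+\eta^{-1})]^{-2}$, that is, \eqref{eq:main-drift-amplitude}.

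For part (b), we apply the same pairing to $\tilde w$ with the It\^o formula. The adjoint pairing now produces the stochastic integral $\int_0^{T_0}\big(\gamma^1\varphi(t,y^1)-\gamma^2\varphi(t,y^2)\big)dW$, plus boundary and terminal terms bounded in $L^2(\Omega)$ by $C\|w\|_{\operatorname{obs}}$. By It\^o's isometry,
\begin{equation*}
\int_0^{T}\big|\gamma^1(t)\varphi(t,y^1)-\gamma^2(t)\varphi(t,y^2)\big|^2dt\le C\|w\|^2_{\operatorname{obs}} .
\end{equation*}
Unlike the drift case, this gives full $L^2$-in-time information rather than scalar moments. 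We take $\varphi$ to be the time-independent probe $\psi\equiv1$, which is admissible after the shift $e^{\mu t}$, with the drift handled by the weight. This gives $\|\gamma^1-\gamma^2\|_{L^2}\le C\|w\|_{\operatorname{obs}}$ directly, including the sign. Next we take $\psi$ with $\nabla\psi(y)$ nondegenerate, for example harmonic-type functions approximating $x_i$. Then $\gamma^1\psi(y^1)-\gamma^2\psi(y^2)=\gamma^1(\psi(y^1)-\psi(y^2))+(\gamma^1-\gamma^2)\psi(y^2)$. Combining this with $\|\gamma^1\|_{L^2}\ge r$ and the bound $M$ gives $|y^1-y^2|\le C(M,r)\|w\|_{\operatorname{obs}}$. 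Here the mean value theorem on the convex hull handles $\psi(y^1)-\psi(y^2)$. If $G$ is not convex, we would either use a global $\psi$ with injective gradient map, namely the $n$ coordinate-type probes, or a compactness argument for large separations.
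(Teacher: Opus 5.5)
\textbf{Gap in the drift-strength estimate.} Your route to \eqref{eq:main-drift-amplitude} does not give the stated rate. Your probes control $\int_0^T\mathrm e^{\kappa t}(\lambda^1-\lambda^2)\,dt$ with error $C\|u^1-u^2\|_{\operatorname{obs}}$ only for $\kappa$ in a fixed real interval $I$. You then invert by analytic continuation (M\"untz/Hausdorff). Under an $H_0^1$ bound, no inversion from such data can produce the modulus $[\log(3+\eta^{-1})]^{-2}$. To see this, take $f_\tau(t)=\tau^{-1}\phi(t)\sin(\tau t)$ with $\phi$ a Gevrey-$s$ bump supported in $(0,T)$ and $1<s<2$. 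Then $\|f_\tau\|_{H^1(0,T)}\leq C$ and $\|f_\tau\|_{L^2(0,T)}\geq c\tau^{-1}$. On the other hand, $\mathrm e^{\kappa t}\phi$ is Gevrey-$s$ uniformly for $\kappa\in I$, so $\sup_{\kappa\in I}|\int_0^T\mathrm e^{\kappa t}f_\tau\,dt|\leq C\mathrm e^{-c\tau^{1/s}}=:\eta$. Hence $\|f_\tau\|_{L^2}\geq c(\log\eta^{-1})^{-s}\gg(\log\eta^{-1})^{-2}$ as $\eta\to0$.

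The missing idea is that the Green identity is available at \emph{complex} spectral parameters, with an explicit cost. In the paper, one takes $p=p_0+\mathrm i\tau$ with $p_0>0$ fixed. The constant in \eqref{eq:det-master} is then independent of $\tau$. The amplitude probe $\psi^p_{x^2,\varpi}$ has $H^2$-norm at most $C\mathrm e^{C\sqrt{|\tau|}}$, because $|\kappa|\sim|\tau|^{1/2}$. Combined with the location estimate and \eqref{eq:probe-close}, this controls the Fourier transform of $f=\mathrm e^{-p_0t}(\lambda^1-\lambda^2)\mathbf 1_{(0,T)}$ on $|\tau|\leq R$ with error $C\mathrm e^{C\sqrt R}\varepsilon$. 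Plancherel's theorem, the $H^1$ tail bound $\int_{|\tau|>R}|\widehat f|^2\leq CM^2R^{-2}$, and the choice $\sqrt R\sim\log\varepsilon^{-1}$ then give exactly $(\log\varepsilon^{-1})^{-2}$, with no analytic continuation. The exponent $2$ comes from this $\sqrt{|\tau|}$ growth, which real $\kappa$ cannot supply.

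\textbf{Smaller points on the drift location.}
\begin{itemize}
\item The moment with $\psi\approx1$ controls $\hat\lambda^1-\hat\lambda^2$ only at $\kappa=\mu$, where $\psi\equiv1$ is an exact solution. There $\hat\lambda^1$ may vanish.
\item At a nondegenerate $\kappa$ you must eliminate $\hat\lambda^2$ exactly. That requires a solution vanishing at $x^2$ with $|\psi(x^1)|\geq c|x^1-x^2|$ uniformly. Approximate identities leave an $O(|x^1-x^2|)$ error that does not close.
\item In one dimension you must also keep the spectral parameter in a range where the $\sinh$-type probe has no zeros.
\item The existence of a nondegenerate $\kappa$ needs an argument. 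For real $\kappa$, it follows from compactness of $\{\|\lambda\|_{H^s}\leq M,\ \|\lambda\|_{L^2}\geq r\}$ in $L^2(0,T)$ together with injectivity of the Laplace transform on an interval.
\end{itemize}

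\textbf{Part (b) is correct and differs from the paper.} The probe $\mathrm e^{\mu t}$, which is the paper's $v\equiv1$ at $p=-\mu$, gives $\|\gamma^1-\gamma^2\|_{L^2(0,T)}\leq C\varepsilon$ at once, without location information or a priori bounds. Exact coordinate-type solutions at $p=-\mu$, such as $\varrho\cdot\xi$ with $\varrho\perp A$ and $\mathrm e^{-A\cdot\xi}$, then give the location using only $\|\gamma^1\|_{L^2}\geq r$. The paper proceeds in the other order. It proves the location first, with a probe adapted to the pair, and then the strength via $|\beta-1|\leq C|y^1-y^2|$ and the bound $M$.
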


\begin{remark}[Comparison with deterministic single-source stability]
\label{rem:deterministic-single}
In the absence of the diffusion channel, \cref{thm:main-single}(a) recovers the Lipschitz location and logarithmic strength estimates of \cite[Theorems~2.1 and~2.3]{HuangJinKianTriki2026} for $n=3$ and $n=2$, respectively; for $n=1$, it removes the restriction $\mu=-|A|^2/4$ and replaces the nondegeneracy condition on $\int_0^T\lambda^1(t)\,dt$ in Theorem~2.4 by the $L^2(0,T)$ lower bound in \cref{ass:lambda}.
\end{remark}

For multiple source configurations, the drift estimate determines one
weighted temporal integral for each source, whereas the diffusion estimate
recovers the strengths in $L^2(0,T)$.

Set
$p_\sharp=-\mu-\frac{|A|^2}{4}$. 
For multiple drift sources, localization via polynomial annihilation requires a minimal separation distance between sources and a lower bound on the weighted temporal moments to be recovered.\vspace{-1mm}
\begin{assumption}[Multiple drift sources]
\label{ass:multi-drift}
Suppose that, for some $\delta,r>0$,\vspace{-2mm}
\begin{align}
\label{eq:multi-drift-separation}
|x_k^j-x_l^j|&\geq\delta &&(k\neq l,\ j=1,2),\\
\label{eq:multi-drift-moment-lower}
\bigg|\int_0^T\mathrm e^{-p_\sharp t}\lambda_k^1(t)\,dt\bigg|
&\geq r &&(k=1,\ldots,N_0).
\end{align}\vspace{-4mm}
\end{assumption}
Multiple diffusion sources require the same separation condition, an \(L^2\)-nondegeneracy lower bound on each reference strength, and a uniform total-energy upper bound on all candidates. \vspace{-2mm}
\begin{assumption}[Multiple diffusion sources]
\label{ass:multi}
Suppose that, for some $\delta,r,M>0$,
\begin{align}
\label{eq:sep}
&|y_l^j-y_m^j| \geq\delta &&(l\neq m,\ j=1,2),\\
\label{eq:multi-nondeg}
&\|\gamma_l^1\|_{L^2(0,T)} \geq r &&(l=1,\ldots,N_1),\\
\label{eq:multi-upper}
&\sum_{l=1}^{N_1}\bigl(\|\gamma_l^1\|_{L^2(0,T)}
+\|\gamma_l^2\|_{L^2(0,T)}\bigr) \leq M.
\end{align}
\end{assumption}

\begin{theorem}[Finite-source stability]
\label{thm:main-multi}
Under Condition~\ref{con1}, the following statements hold.
\begin{enumerate}[(a)]
\item \emph{Drift sources.}
Let $n\in\{2,3\}$ and $N_0\geq2$, and suppose that
Assumption~\ref{ass:multi-drift} holds. There exist a constant
$C=C(N_0,\delta,r)>0$ and a permutation $\sigma$ of
$\{1,\ldots,N_0\}$ such that\vspace{-2mm}
\begin{equation}
\label{eq:multi-drift-location-result}
\max_{1\leq k\leq N_0}|x_k^1-x_{\sigma(k)}^2|
\leq C\|u^1-u^2\|_{\operatorname{obs}}.\vspace{-2mm}
\end{equation}
If, in addition,\vspace{-3mm}
\begin{equation}
\label{eq:multi-drift-upper}
\sum_{k=1}^{N_0}
\left(\|\lambda_k^1\|_{L^2(0,T)}
+\|\lambda_k^2\|_{L^2(0,T)}\right)\leq M\vspace{-2mm}
\end{equation}
for some $M>0$, then the same permutation satisfies, with
$C=C(N_0,\delta,r,M)$,\vspace{-2mm}
\begin{equation}
\label{eq:multi-drift-moment-result}
\max_{1\leq k\leq N_0}
\bigg|\int_0^T\mathrm e^{-p_\sharp t}
\bigl(\lambda_k^1(t)-\lambda_{\sigma(k)}^2(t)\bigr)\,dt\bigg|
\leq C\|u^1-u^2\|_{\operatorname{obs}}.\vspace{-2mm}
\end{equation}

\item \emph{Diffusion sources.}
Let $n\in\{1,2,3\}$ and $N_1\geq2$, and suppose that
Assumption~\ref{ass:multi} holds. There exist a constant
$C=C(N_1,\delta,r,M)>0$ and a
permutation $\sigma$ of $\{1,\ldots,N_1\}$ such that\vspace{-2mm}
\begin{equation}
\label{eq:multi-result}
\max_{1\leq l\leq N_1}
\left(
|y_l^1-y_{\sigma(l)}^2|
+\|\gamma_l^1-\gamma_{\sigma(l)}^2\|_{L^2(0,T)}
\right)
\leq C\|u^1-u^2\|_{\operatorname{obs}}.\vspace{-2mm}
\end{equation}
\end{enumerate}
\end{theorem}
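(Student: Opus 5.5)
We prove \cref{thm:main-multi} through the same two-channel reduction used for \cref{thm:main-single}. Set $w=u^1-u^2$ and split it as $w=\bar w+\tilde w$, where $\bar w=\mathbb E w$ and $\tilde w=w-\mathbb E w$. Taking expectations, $\bar w$ solves a deterministic Neumann problem with zero initial datum and drift source $\sum_k(\lambda_k^1\delta_{x_k^1}-\lambda_k^2\delta_{x_k^2})$. The centered part $\tilde w$ is a stochastic convolution driven only by $\sum_l(\gamma_l^1\delta_{y_l^1}-\gamma_l^2\delta_{y_l^2})\,dW$. Both traces are controlled by $\|w\|_{\operatorname{obs}}$, since $\|\bar w\|+\|\tilde w\|\le 2\|w\|$ in each of the three observation norms.

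The key tool is a boundary-to-source identity. For a smooth solution $\varphi$ of the adjoint elliptic equation $\mathcal L^*\varphi=p\varphi$ in $G$, Green's identities are applied on $(0,T)\times G$ against $e^{-pt}\varphi$ on the time interval $(T_1,T)$, where all sources are switched off. Combined with the terminal-state regularity argument already used for \cref{thm:main-single}, this expresses the source functional through traces.

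In the drift channel this gives
\begin{equation*}
\sum_k\Bigl(\hat\lambda_k^1(p)\varphi(x_k^1)-\hat\lambda_k^2(p)\varphi(x_k^2)\Bigr)=\mathcal B_p[\bar w|_\Sigma,\varphi],\qquad \hat\lambda(p)=\int_0^T e^{-pt}\lambda(t)\,dt .
\end{equation*}
In the diffusion channel, applying It\^o's formula to $\int_G\tilde w(t)\varphi_t\,dx$ for a time-dependent adjoint solution $\varphi_t$ and then It\^o's isometry gives
\begin{equation*}
\int_0^T\Bigl|\sum_l\bigl(\gamma_l^1(t)\psi(t,y_l^1)-\gamma_l^2(t)\psi(t,y_l^2)\bigr)\Bigr|^2dt\le C_\psi\|w\|_{\operatorname{obs}}^2
\end{equation*}
for every backward adjoint solution $\psi$ with known terminal profile. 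We take $\psi(t,x)=e^{-p(T-t)}\varphi(x)$. The boundary term is then bounded by $\|w\|_{\operatorname{obs}}$ with constants depending on $\|\varphi\|_{C^2(\overline G)}$.

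\textbf{Drift sources, $n\in\{2,3\}$.} We fix $p=p_\sharp$. With this choice $\mathcal L^*\varphi=p_\sharp\varphi$ becomes, after the gauge $\varphi=e^{-A\cdot x/2}h$, the equation $\Delta h=0$. In $n=2$ we take $h=\operatorname{Re}/\operatorname{Im}$ of $P(z)$ with $z=x_1+ix_2$. In $n=3$ we take $h=P(\zeta\cdot x)$ with a complex isotropic vector $\zeta$ ($\zeta\cdot\zeta=0$), which is harmonic for every polynomial $P$.

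Write $a_k^j=\hat\lambda_k^j(p_\sharp)$. The moment identities then control the power sums $\sum_k\bigl(a_k^1 e^{-A\cdot x_k^1/2}(z_k^1)^m-a_k^2e^{-A\cdot x_k^2/2}(z_k^2)^m\bigr)$ for $m=0,\dots,2N_0-1$, each bounded by $\|w\|_{\operatorname{obs}}$.

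The location estimate uses the annihilator $P(z)=\prod_{k\ne k_0}(z-z_k^2)\,(z-z_{k_0}^2)$ and its variants. The separation $\delta$ and the lower bound $|a_k^1|\ge r$ show that each $z_k^1$ lies within $C\|w\|_{\operatorname{obs}}$ of some $z_{\sigma(k)}^2$, and separation forces $\sigma$ to be a bijection. This is a standard Prony-type stability argument, applicable once the data are small; for large data the estimate is trivial by boundedness of $G$.

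Position in $\mathbb R^3$ is recovered by repeating the argument with three generic isotropic directions $\zeta$. This requires the projected points $\zeta\cdot x_k$ to stay separated, which we ensure by choosing $\zeta$ from a finite family depending only on $\delta$ and $N_0$ via pigeonhole.

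With locations matched, the Vandermonde system gives $|a_k^1-a_{\sigma(k)}^2|\le C\|w\|_{\operatorname{obs}}$, using the upper bound $M$ to absorb the location perturbation terms.

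\textbf{Diffusion sources, $n\in\{1,2,3\}$.} Here the probe inequality holds for each fixed $t$ in $L^2$ sense, so the argument is the same matching argument performed pointwise-in-$L^2(0,T)$. The difference is that the weights are now functions $\gamma_l(t)$ rather than scalars.

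For $n=2,3$ we use the same harmonic annihilators, now with arbitrary $p$ (any $p$ works because the time weight is explicit). For $n=1$, polynomial annihilators are unavailable. Instead we use spectral synthesis: the functions $\varphi_p$ solving $\varphi''-A\varphi'+(p-\mu)\varphi=0$ with free boundary data give $e^{\kappa x}$ for arbitrary $\kappa\in\mathbb C$. Exponential sums then play the role of polynomials, and we build annihilators $\prod_{k\ne k_0}(e^{\kappa}-e^{\kappa_k})$-type combinations from finitely many $e^{m\kappa x}$.

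Nondegeneracy enters through $\|\gamma_l^1\|_{L^2}\ge r$ rather than through a scalar moment, and It\^o's isometry recovers the full $\gamma_l^1-\gamma_{\sigma(l)}^2$ in $L^2(0,T)$ by a Vandermonde inversion in $L^2$.

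\textbf{Main obstacle.} The hardest step is turning the approximate annihilation identities into a \emph{linear} (Lipschitz) location bound up to permutation, with constants depending only on $(N,\delta,r,M)$. We would try an induction on $N$ with a two-scale argument. If a configuration-$1$ point is at distance $\ge\delta/4$ from all configuration-$2$ points, the annihilator of configuration $2$ isolates it, and the weight bound $r$ gives a contradiction unless $\|w\|_{\operatorname{obs}}\gtrsim 1$. Otherwise the pairing is forced, and linearizing $\varphi(x_k^1)-\varphi(x_{\sigma(k)}^2)$ with probes whose gradients at $x_{\sigma(k)}^2$ span $\mathbb R^n$ while vanishing to second order at the other points yields the Lipschitz bound.
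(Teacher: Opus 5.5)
\textbf{Verdict: genuine gap in the three-dimensional step.} Your architecture matches the paper's: mean/centered splitting, Green and It\^o boundary-to-source bounds, the gauge reducing the adjoint equation at $p_\sharp$ to $\Delta h=0$, polynomial annihilators, nearest-point matching, normalized probes for coefficients, and synthesis from $\mathrm e^{j\xi}$ in one dimension. The three-dimensional localization, however, does not go through as written.

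For $\zeta=\varrho+\mathrm i f$ with $\varrho,f$ orthonormal and $g=\varrho\times f$, one has $|\zeta\cdot u|^2=|u|^2-|g\cdot u|^2$ for real $u$. So a probe $P(\zeta\cdot\xi)$ only sees projections onto $g^\perp$. Keeping projections separated \emph{within} each configuration lets you run the planar argument. It only yields a projected matching $\sigma_\zeta$ with $|\zeta\cdot(x_k^1-x_{\sigma_\zeta(k)}^2)|\le C\varepsilon$, because a cross pair $x_k^1,x_m^2$ far apart in $\mathbb R^3$ but nearly aligned with $g$ looks like a close pair. Since $\sigma_\zeta$ may depend on $\zeta$, three generic directions give neither a single $m$ with $|x_k^1-x_m^2|\le C\varepsilon$ nor a common permutation. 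For $N_0\ge3$ nothing in your argument prevents $x_k^1$ from being matched to three different points in the three directions.

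The missing idea is to adapt the isotropic direction to each target, including the cross-configuration differences. Let the $2N-1$ nodes $b_j$ be the other points of configuration~1 and all points of configuration~2. The double caps $\{g\in\mathbb S^2:|g\cdot(x_l^1-b_j)|>\alpha_N|x_l^1-b_j|\}$, with $\alpha_N=1-(2N)^{-1}$, have total normalized measure at most $(2N-1)/(2N)<1$. So some $g$ avoids all of them, and the corresponding $\theta$ satisfies $|\theta\cdot(x_l^1-b_j)|\ge c_N|x_l^1-b_j|$ for every node. Then $H(\xi)=\mathrm e^{-A\cdot(\xi-x_l^1)/2}\prod_j(\theta\cdot\xi-\theta\cdot b_j)$ satisfies $|H(x_l^1)|\ge c_N^{2N-1}\prod_j|x_l^1-b_j|$ in genuine three-dimensional distances, and one probe per target suffices. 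Your pigeonhole could alternatively be repaired with at least $N+1$ well-separated admissible directions, so that for each $k$ two of them return the same matched index and control the full vector; but that must be argued. The same issue affects your diffusion argument for $n=3$.

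\textbf{The Lipschitz step you call the main obstacle is immediate.} Your induction/linearization sketch does not close, and it is not needed. Use the probe vanishing at the other $N-1$ points of configuration~1 \emph{and} at all $N$ points of configuration~2. Your $\prod_{k\ne k_0}(z-z_k^2)(z-z_{k_0}^2)$ equals $\prod_k(z-z_k^2)$ and does not isolate $x_{k_0}^1$. With the correct probe the functional reduces to $c_l^1H(x_l^1)$, where $c_l^1$ is $\widetilde{\lambda_l^1}(p_\sharp)$ or $\gamma_l^1$. If $x_{m_l}^2$ is the point of configuration~2 nearest to $x_l^1$, separation and the triangle inequality give $|x_l^1-x_m^2|\ge\delta/2$ for $m\ne m_l$. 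The within-configuration factors are at least $\delta$. Hence only one factor of the product can be small, and $r\,c\,\delta^{N-1}(\delta/2)^{N-1}|x_l^1-x_{m_l}^2|\le C\varepsilon$ is already Lipschitz. For small $\varepsilon$ these nearest points form a permutation; for large $\varepsilon$ boundedness of $G$ suffices.

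By contrast, your second-order-vanishing probes leave remainders of size $\sum_m|c_m^1|\,|x_m^1-x_{\sigma(m)}^2|^2$. These can be absorbed only if all displacements are already small relative to probe-dependent constants, and the dichotomy at scale $\delta/4$ does not provide that.

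\textbf{A smaller correction.} Polynomial probes in $\theta\cdot\xi$ solve the adjoint equation only at $p_\sharp$, under the convention $(\mathcal L^*+p)\varphi=0$; your $\mathcal L^*\varphi=p\varphi$ flips the sign. So ``any $p$ works'' is false in the diffusion channel, and you should use $p_\sharp$ there as well.
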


\begin{remark}[Comparison with deterministic finite-source stability]
\label{rem:deterministic-multiple}
In the planar case $\mu=-|A|^2/4$, we have $p_\sharp=0$, so
\eqref{eq:multi-drift-moment-lower} reduces to the moment condition in
\cite[Theorem~2.2]{HuangJinKianTriki2026}.
Under this specialization, \cref{thm:main-multi}(a) improves the
H\"older location estimate with exponent $1/N_0$ to a Lipschitz estimate
and removes the a priori nearest-neighbor matching condition imposed
there; see also \cref{rem:deterministic-matching}.

Moreover, it removes the coefficient restriction $\mu=-|A|^2/4$
and extends the result to dimension three, with nondegeneracy imposed
on the temporal moments weighted by $\mathrm e^{-p_\sharp t}$,
where $p_\sharp=-\mu-|A|^2/4$.
Under the additional bound \eqref{eq:multi-drift-upper}, the same
permutation gives Lipschitz recovery of the weighted temporal moments.
\end{remark}

\begin{remark}
The restriction $n\in\{2,3\}$ in \cref{thm:main-multi}(a) is essential:
in dimension one, two distinct configurations of two drift sources can
produce identical boundary traces, even under the switch-off condition
\eqref{eq:switch-off} and the moment condition
\eqref{eq:multi-drift-moment-lower}.
Such a counterexample follows by modifying the construction in
\cite[Section~4]{AndrleBelgacemElBadia2011} so that the auxiliary heat
solution returns to zero at $T_0$.
\end{remark}

The preceding counterexample exposes a fundamental limitation in one dimension while underscoring the importance of separation and nondegeneracy in higher dimensions. Our analysis refines the probe construction and localization of \cite{HuangJinKianTriki2026}. For a single drift source, a nonvanishing Laplace mode in a bounded frequency window, combined with frequency-dependent hyperbolic-sine probes, yields uniform one-dimensional estimates without the earlier coefficient and moment restrictions. For multiple sources, complex-isotropic polynomial probes at \(p_\sharp\) extend the planar construction to three dimensions and accommodate general \(\mu\). Separation ensures at most one small distance factor in the product estimate, giving Lipschitz localization and a matching permutation; normalized probes then recover the coefficients.

After separating the stochastic equation into expectation and centered parts, a stochastic Green identity and Itô's isometry control source evaluation in \(L^2(0,T)\), while deterministic testing controls scalar Laplace moments. This full-time integrand control yields Lipschitz recovery of signed diffusion strengths and permits combining finitely many spectral parameters into annihilating probes in one dimension, where finite-source diffusion recovery follows from the same localization and matching argument.

The remainder is organized as follows. \Cref{sec:forward} states the direct-problem regularity, separates channels, and derives boundary-to-source estimates. \Cref{sec:single-proof,sec:multiple} contain the single-source proof and the finite-source matching principle, probe constructions, and stability proof, respectively. \Cref{sec:numerics} presents numerical experiments, and Appendix~\ref{app:direct} establishes the direct problem and its boundary regularity.

\vspace{-2mm}

\section{Channel separation and boundary-to-source estimates}
\label{sec:forward}

The drift and diffusion channels reduce to fundamentally different estimates: deterministic scalar moments for the drift, and stochastic $L^2(0,T)$-valued estimates for the diffusion. Taking expectations removes the stochastic integral and leaves a deterministic equation for the mean, while the centered part is controlled by It\^o's isometry, which preserves the full $L^2$-energy of the integrand. The following subsections derive the boundary-to-source estimates for both channels.

Let $\mathcal A=-\mathcal L$ be the Neumann realization on $L^2(G)$, with\vspace{-2mm}
\begin{equation}
\label{eq:A-def}
\mathcal A u=\Delta u-A\cdot\nabla u-\mu u,
\quad  D(\mathcal A)=\{u\in H^2(G):\partial_\nu u=0\}, \quad 
\mathcal X_{-2}=D(\mathcal A^*)',\vspace{-2mm}
\end{equation}
where the dual is taken with pivot $L^2(G)$. We write $S(t)$ and
$S_{-2}(t)$ for the corresponding semigroups; their construction is given in
Appendix~\ref{app:direct}. The formal adjoint expression is
$\mathcal L^*=-\Delta-A\cdot\nabla+\mu$.

\begin{proposition}[Well-posedness and boundary regularity]
\label{prop:direct-wellposed}
For every fixed finite source configuration in \eqref{eq:model}, the direct
problem admits a unique mild solution
$
u\in L^2_{\mathbb F}(\Omega;C([0,T];\mathcal X_{-2}))$. 
Moreover,\vspace{-3mm}
\begin{equation}
\label{eq:wellposed-est}
\mathbb E\sup_{0\leq t\leq T}\|u(t)\|_{\mathcal X_{-2}}^2
\leq C\Bigl(
\|u_0\|_{L^2(G)}^2
+\sum_{k=1}^{N_0}\|\lambda_k\|_{L^2(0,T)}^2
+\sum_{l=1}^{N_1}\|\gamma_l\|_{L^2(0,T)}^2
\Bigr),\vspace{-2mm}
\end{equation}
where $C$ depends only on $G,A,\mu,T,N_0$, and $N_1$.
If Condition~\ref{con1} holds, then $\|u\|_{\operatorname{obs}}<\infty$ and 
$u\in L^2(\Omega;H^1(T_1,T;H^2(G)))$. 
In particular,\vspace{-3mm}
\begin{equation}
\label{eq:boundary-trace-estimate}
\|u\|_{\operatorname{obs}}^2\!
+\!\mathbb E\|u\|_{H^1(T_1,T;H^2(G))}^2
\! +\!\mathbb E\|u(T)\|_{H^2(G)}^2 \!\leq\! C_{\rm conf}\Bigl(
\|u_0\|_{L^2(G)}^2\!
+\!\sum_{k=1}^{N_0}\|\lambda_k\|_{L^2(0,T)}^2\!
+\!\sum_{l=1}^{N_1}\|\gamma_l\|_{L^2(0,T)}^2\Bigr).\vspace{-2mm}
\end{equation}
Here $C_{\rm conf}$ may additionally depend on $T_0,T_1$ and the source
locations, but not on the source strengths.
\end{proposition}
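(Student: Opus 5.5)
We will build the mild solution from the variation-of-constants formula in the extrapolation space and then use the switch-off condition to gain regularity after $T_0$. First, we need $\delta_\xi\in\mathcal X_{-2}$. Since $n\le 3$, the embedding $H^2(G)\hookrightarrow C(\overline G)$ holds, so $\varphi\mapsto\varphi(\xi)$ is bounded on $D(\mathcal A^*)$ with norm at most $C$, uniformly in $\xi\in G$. The semigroup $S(t)$ extends to a $C_0$-semigroup $S_{-2}(t)$ on $\mathcal X_{-2}$, obtained by duality from $S^*(t)$ on $D(\mathcal A^*)$. We define
\[
u(t)=S_{-2}(t)u_0+\int_0^t S_{-2}(t-s)\sum_k\lambda_k(s)\delta_{x_k}\,ds+\int_0^t S_{-2}(t-s)\sum_l\gamma_l(s)\delta_{y_l}\,dW(s).
\]
The deterministic convolution is continuous in $\mathcal X_{-2}$ and bounded by $C\sum_k\|\lambda_k\|_{L^2}$ via Cauchy--Schwarz. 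For the stochastic convolution we use the factorization method, or equivalently a maximal inequality for stochastic convolutions of $C_0$-semigroups on a Hilbert space. It yields continuity and
\[
\mathbb E\sup_t\Bigl\|\int_0^t\cdots\,dW\Bigr\|_{\mathcal X_{-2}}^2\le C\sum_l\|\gamma_l\|^2_{L^2}.
\]
Together with $\|S_{-2}(t)u_0\|_{\mathcal X_{-2}}\le C\|u_0\|_{L^2}$, this gives \eqref{eq:wellposed-est}. For uniqueness we test against $\varphi\in D(\mathcal A^*)$ and use the weak formulation; this identifies any mild solution with the formula above.

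For the regularity under \cref{con1}, we use the semigroup property after $T_0$: for $t\ge T_0$ we have $u(t)=S_{-2}(t-T_0)u(T_0)$, a deterministic evolution of the random datum $u(T_0)\in\mathcal X_{-2}$. Because $S$ is analytic (a Neumann elliptic operator with lower-order terms), $S_{-2}(\tau)$ maps $\mathcal X_{-2}$ into $D(\mathcal A^m)$ for every $m$ when $\tau>0$. More precisely, $\|\mathcal A^m S_{-2}(\tau)\|_{\mathcal X_{-2}\to L^2}\le C\tau^{-m-1}$. Taking $T_1-T_0>0$ as the gap, we obtain
\[
\sup_{t\in[T_1,T]}\bigl(\|u(t)\|_{D(\mathcal A^2)}+\|\partial_t u(t)\|_{D(\mathcal A)}\bigr)\le C(T_1-T_0)\|u(T_0)\|_{\mathcal X_{-2}}.
\]
This gives $u\in H^1(T_1,T;H^2(G))$ pathwise, the bound on $\|u(T)\|_{H^2}$, and, through traces, the control of $\|u\|_{L^2(T_1,T;H^1(\Gamma))}$ and $\|u\|_{H^1(T_1,T;L^2(\Gamma))}$. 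Here $H^2(G)$ traces lie in $H^{3/2}(\partial G)$. Adaptedness holds because $u(t)$ is a measurable function of $u(T_0)$ and of the filtration on $[T_0,T]$.

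The delicate piece is the $L^2_{\mathbb F}(0,T;L^2(\partial G))$ bound on the whole interval, including times near the source activity where $u$ is genuinely singular. We decompose $u=u_0$-part $+\sum_k u_k+\sum_l v_l$. For the deterministic part we write
\[
u_k(t)=\int_0^t\lambda_k(s)\,\mathcal K(t-s,\cdot,x_k)\,ds,
\]
where $\mathcal K$ is the Neumann heat kernel. Since $x_k\in G$ lies at distance $d_k=\operatorname{dist}(x_k,\partial G)>0$ from the boundary, Gaussian bounds give $\sup_{z\in\partial G}|\mathcal K(\tau,z,x_k)|\le C(d_k)$ for all $\tau\in(0,T]$. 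Young's inequality then bounds the boundary trace in $L^2(0,T;L^\infty(\partial G))$ by $C(d_k)\|\lambda_k\|_{L^2}$. For the stochastic part, Itô's isometry gives
\[
\mathbb E\int_0^T\|v_l(t)\|^2_{L^2(\partial G)}\,dt=\int_0^T\int_0^t\gamma_l(s)^2\|\mathcal K(t-s,\cdot,y_l)\|^2_{L^2(\partial G)}\,ds\,dt\le C(d_l)\|\gamma_l\|^2_{L^2}.
\]
The $u_0$ term is handled by standard $L^2(0,T;H^{1/2}(\partial G))$ trace regularity of parabolic solutions with $L^2$ data. This is why $C_{\rm conf}$ depends on the source locations.

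We expect the main obstacle to be justifying these kernel bounds for the Neumann problem with drift $A\cdot\nabla$ and potential $\mu$, together with boundary evaluation of the stochastic convolution. Our first attempt is to reduce the drift: the conjugation $u=e^{A\cdot x/2}w$ turns $\mathcal L$ into $-\Delta+\text{const}$ but introduces a Robin boundary condition. Alternatively, we use a cutoff: $\chi u$ with $\chi$ supported away from the sources solves an equation with no singular source, so interior and boundary parabolic regularity near $\partial G$ apply. We expect this localized approach to be the cleaner route.
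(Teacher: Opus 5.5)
Your proposal is correct in outline and follows essentially the paper's route: the uniform bound on $\|\delta_y\|_{\mathcal X_{-2}}$ from $H^2(G)\hookrightarrow C(\overline G)$, and the $\mathcal X_{-2}$-valued mild formula. It then uses $u(t)=S_{-2}(t-T_0)u(T_0)$ with analytic smoothing at lag $T_1-T_0$ for the late-time and terminal $H^2$ bounds, and off-source Gaussian kernel bounds with It\^o's isometry for the full-time trace on $\partial G$. For the trace, the paper does not evaluate the kernel pointwise on $\partial G$; it interpolates to $\|\chi K_y(t)\|_{H^1(G)}\le C_\chi t^{-(n+2)/4}\mathrm e^{-c\mathbf d_\chi^2/t}$, which makes the trace of the stochastic convolution rigorous through the $H^1$-valued It\^o isometry.

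The one step to repair is the maximal inequality. The factorization method does not give $\mathbb E\sup_t\|\cdot\|_{\mathcal X_{-2}}^2\le C\|\gamma\|_{L^2(0,T)}^2$ when $\gamma$ is only in $L^2(0,T)$: at $p=2$ the exponent $\alpha$ would have to satisfy both $\alpha>1/2$ and $\alpha<1/2$. The $p=2$ maximal inequality is standard for quasi-contraction semigroups, not for arbitrary $C_0$-semigroups. You should therefore note, as the paper does, that conjugation by $\mathrm e^{A\cdot x/2}$ makes $\mathcal A$ similar to a self-adjoint Robin operator, so $\|S_{-2}(t)\|\le\mathrm e^{\rho t}$ in an equivalent Hilbert norm.
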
 
The proof of \cref{prop:direct-wellposed} is given in Appendix~\ref{app:direct}.

For the two candidate solutions, set
$v=u^1-u^2$, $\bar v=\mathbb E v$ and $z=v-\bar v$. 
By  the zero-mean property of the
It\^o integral, the mean $\bar v$ is the solution of\vspace{-2mm}
\begin{equation}
\label{eq:mean-equation}
\begin{cases}
\begin{aligned}
& \partial_t \bar v+\mathcal{L} \bar v
=\sum_{k=1}^{N_0}\lambda_k^1(t)\delta_{x_k^1}
-\sum_{k=1}^{N_0}\lambda_k^2(t)\delta_{x_k^2}
&&\text{in }(0,T)\times G,\\
& \partial_\nu \bar v=0
&&\text{on }(0,T)\times\partial G,\\
& \bar v(0)=0
&&\text{in }G,
\end{aligned}
\end{cases}\vspace{-2mm}
\end{equation}
whereas $z$ is the mild solution of\vspace{-2mm}
\begin{equation}
\label{eq:center-equation}
\begin{cases}
\begin{aligned}
&d z+\mathcal{L} z\,dt
=\Big( \sum_{l=1}^{N_1}\gamma_l^1(t)\delta_{y_l^1}
- \sum_{l=1}^{N_1}\gamma_l^2(t)\delta_{y_l^2}\Big)
dW(t)
&&\text{in }(0,T)\times G,\\
& \partial_\nu z=0
&&\text{on }(0,T)\times\partial G,\\
& z(0)=0
&&\text{in }G.
\end{aligned}
\end{cases}\vspace{-2mm}
\end{equation}
The boundary regularity in \cref{prop:direct-wellposed} allows expectation
to commute with the boundary traces. 
It follows that\vspace{-2mm}
\begin{equation}
\label{eq:channel-contraction}
\max\bigl\{\|\bar v\|_{\operatorname{obs}},
 \|z\|_{\operatorname{obs}}\bigr\}
\leq \|v\|_{\operatorname{obs}}.\vspace{-2mm}
\end{equation}

\vspace{-2mm}

\subsection{Boundary-to-source estimates}

In this subsection, we derive boundary-to-source estimates for the two decoupled equations. A terminal-state estimate controls the terminal terms in Green's identities, yielding scalar Laplace-moment bounds for the drift and $L^2(0,T)$ bounds for the diffusion. These estimates are used in  \Cref{sec:single-proof,sec:multiple}.

\begin{lemma}[Terminal-state estimate]
\label{lem:terminal-estimate}
Let $w\in H^1(T_1,T;H^2(G))$ solve\vspace{-2mm}
\begin{equation*}
\partial_t w+\mathcal{L} w=0\quad\text{in }(T_1,T)\times G,
\qquad \partial_\nu w=0\quad\text{on }(T_1,T)\times\partial G.\vspace{-2mm}
\end{equation*}
Then\vspace{-2mm}
\begin{equation}
\label{eq:terminal-carleman}
\|w(T)\|_{H^2(G)}
\leq C\bigl(
\|w\|_{L^2(T_1,T;H^1(\Gamma))}
+\|w\|_{H^1(T_1,T;L^2(\Gamma))}\bigr).\vspace{-2mm}
\end{equation}
\end{lemma}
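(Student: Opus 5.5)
We will prove the estimate by combining a Carleman estimate for the parabolic equation with Neumann boundary condition and partial Cauchy data on $\Gamma$ with an energy argument at the terminal time, and then using elliptic regularity to upgrade to $H^2(G)$. Since $w$ solves a homogeneous Neumann problem with constant coefficients, $\partial_t w$ satisfies the same equation, and $\partial_\nu w=0$ on $\partial G$. The data on $\Gamma$ are therefore full Cauchy data $(w|_\Gamma,\partial_\nu w|_\Gamma=0)$, and the relevant norms of the Dirichlet trace, $L^2(T_1,T;H^1(\Gamma))$ together with $H^1(T_1,T;L^2(\Gamma))$, are exactly those appearing in \eqref{eq:terminal-carleman}.

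\textbf{Step 1: interior unique continuation.} We apply a parabolic Carleman estimate in the Imanuvilov style. It uses a weight $\varphi(t,x)=e^{\lambda\psi(x)}/((t-T_1)(T-t))$, where $\psi$ has no critical points in $\overline G$ and satisfies $\partial_\nu\psi\le 0$ on $\partial G\setminus\Gamma$; such a $\psi$ exists by the standard construction for a $C^2$ domain and relatively open $\Gamma$. Applied to $w$ and to $\partial_t w$, the estimate bounds weighted norms of $w$ and $\partial_t w$ on $(T_1+\epsilon,T-\epsilon)\times G$ by boundary integrals over $\Gamma$ of $|w|^2$, $|\nabla_{\Gamma}w|^2$ and $|\partial_t w|^2$. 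The Neumann term vanishes, and $\partial_t w$ requires only the $H^1(T_1,T;L^2(\Gamma))$ norm because the boundary terms for the $\partial_t w$ equation involve $\partial_t w$ only at the $L^2$ level, plus tangential gradients that we absorb. This yields
\begin{equation*}
\|w\|_{L^2(t_1,t_2;L^2(G))}\le C\bigl(\|w\|_{L^2(T_1,T;H^1(\Gamma))}+\|w\|_{H^1(T_1,T;L^2(\Gamma))}\bigr)
\end{equation*}
for some fixed subinterval $[t_1,t_2]\subset(T_1,T)$, and the same bound for $\partial_t w$.

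\textbf{Step 2: propagation to $t=T$ and $H^2$.} We use forward well-posedness of the Neumann problem: the $L^2$ energy grows at most exponentially, so $\|w(T)\|_{L^2}\le C\|w(t)\|_{L^2}$ for every $t\in[t_1,t_2]$. Averaging over $t$ gives $\|w(T)\|_{L^2}\le C\|w\|_{L^2(t_1,t_2;L^2)}$, and the same argument applied to $\partial_t w$ bounds $\|\partial_t w(T)\|_{L^2(G)}$. At $t=T$ we have $\mathcal L w(T)=-\partial_t w(T)$ with $\partial_\nu w(T)=0$. Neumann elliptic regularity on a $C^2$ domain, valid for $\mathcal L+c$ with a suitable shift, then gives
\begin{equation*}
\|w(T)\|_{H^2(G)}\le C\bigl(\|\partial_t w(T)\|_{L^2(G)}+\|w(T)\|_{L^2(G)}\bigr),
\end{equation*}
which proves \eqref{eq:terminal-carleman}. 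Parabolic smoothing could give the same bound, but this route uses only the stated regularity $w\in H^1(T_1,T;H^2)$, which ensures $\partial_t w(T)\in L^2$ makes sense.

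\textbf{Main obstacle.} The delicate point is the Carleman estimate with Neumann condition on all of $\partial G$ but data only on $\Gamma$. The boundary terms on $\partial G\setminus\Gamma$ must have a favorable sign; the sign condition $\partial_\nu\psi\le0$ there controls this, and the tangential gradient terms must be absorbed. Applying the estimate to $\partial_t w$ also needs $\partial_t w$ to be regular enough. If $\partial_t w$ lies only in $L^2(T_1,T;H^2)$, we will instead apply the estimate to time difference quotients, or to $w$ mollified in time, and pass to the limit. If one prefers to avoid this, we would cite the known Lipschitz observability/Carleman inequality for Neumann parabolic problems from \cite{ImanuvilovYamamoto1998}.
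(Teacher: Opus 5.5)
Your Step~1 has a genuine gap where you apply the Carleman estimate to $\partial_t w$. For a solution $y$ of the Neumann problem, the partial-boundary Carleman estimate you invoke controls $y$ through boundary norms of $y$ itself on $\Gamma$. These are norms of the type $\|y\|_{L^2(T_1,T;H^1(\Gamma))}+\|y\|_{H^1(T_1,T;L^2(\Gamma))}$, which appear in \eqref{eq:terminal-carleman} for $y=w$. Taking $y=\partial_t w$ therefore requires $\nabla_\Gamma\partial_t w$ and $\partial_t^2 w$ in $L^2((T_1,T)\times\Gamma)$. That is one derivative more than the statement provides.

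Your claim that the tangential-gradient terms can be ``absorbed'' is not justified. On the observed part $\Gamma$ these terms are the observation terms of the estimate. The left-hand side contains only interior weighted quantities, with second derivatives carrying the weight $(s\varphi)^{-1}$. Bounding a boundary gradient by interior norms through a trace inequality loses a full power of the large parameter, so no choice of parameters absorbs them. Time mollification or difference quotients do not help either, since the obstruction is the boundary data, not interior regularity. An observability inequality for the Neumann problem with only $\|y\|_{L^2((T_1,T)\times\Gamma)}$ on the right would rescue this step, but that is a separate result which your Carleman argument does not produce. As written, your bound on $\|\partial_t w(T)\|_{L^2(G)}$, and hence the $H^2$ bound, is not established with the stated right-hand side.

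The repair is the route the paper takes (Carleman estimate plus parabolic smoothing):
\begin{enumerate}
\item Use the Carleman estimate only for $w$ itself. This bounds $\|w\|_{L^2((t_1,t_2)\times G)}$ by the right-hand side of \eqref{eq:terminal-carleman}.
\item Your forward energy argument then gives $\|w(t_2)\|_{L^2(G)}\le C\|w\|_{L^2((t_1,t_2)\times G)}$.
\item Analytic smoothing of the Neumann semigroup, $\|S(\tau)\|_{\mathcal L(L^2(G),H^2(G))}\le C\tau^{-1}$ with $\tau=T-t_2>0$, yields $\|w(T)\|_{H^2(G)}\le C\|w(t_2)\|_{L^2(G)}$.
\end{enumerate}
In your elliptic formulation this is the same as $\|\partial_t w(T)\|_{L^2(G)}=\|\mathcal A S(T-t_2)w(t_2)\|_{L^2(G)}\le C(T-t_2)^{-1}\|w(t_2)\|_{L^2(G)}$, after which your Neumann elliptic step applies unchanged. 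Your worry that smoothing needs more than $w\in H^1(T_1,T;H^2(G))$ is unfounded. Such a $w$ is a strong solution, so by uniqueness $w(t)=S(t-t_2)w(t_2)$ for $t\ge t_2$, and the smoothing bound applies directly.
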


\begin{proof}
The estimate follows from the partial-boundary Carleman estimate and
parabolic smoothing, as in the proof of
\cite[Theorem~3.1, (3.2)]{HuangJinKianTriki2026}; see also
\cite[Lemma~2.4 and (2.12)]{ImanuvilovYamamoto1998}.
\end{proof}\vspace{-2mm}

For $q\in L^2(0,T)$, define its finite-time Laplace transform by\vspace{-2mm}
\begin{equation}
\label{eq:Laplace-def}
\widetilde q(p)=\int_0^T\mathrm e^{-pt}q(t)\,dt,
\qquad p\in\mathbb C.\vspace{-2mm}
\end{equation}
For $v\in H^2(G;\mathbb C)$, set\vspace{-2mm}
\begin{equation}
\label{eq:Bstar}
\mathcal B^*v=\partial_\nu v+(A\cdot\nu)v
\quad\text{on }\partial G.\vspace{-2mm}
\end{equation}
The $L^2(G)$ pairing is extended complex bilinearly, and no boundary
condition is imposed on $v$.
For $r\in\mathbb R$, write $r_+=\max\{r,0\}$.

\begin{lemma}[Deterministic boundary-to-source estimate]
\label{lem:det-source-estimate}
Let $w$ be the solution of\vspace{-2mm}
\begin{equation*}
\begin{cases}
\begin{aligned}
&\partial_t w+\mathcal Lw=\sum_{k=1}^Jq_k(t)\delta_{a_k}
&&\text{in }(0,T)\times G,\\
&\partial_\nu w=0
&&\text{on }(0,T)\times\partial G,\\
&w(0)=0
&&\text{in }G,
\end{aligned}
\end{cases}\vspace{-2mm}
\end{equation*}
where
$a_k\in G$ and $q_k\in L^2(0,T)$ vanish on $(T_0,T)$.
If $p\in\mathbb C$, $v\in H^2(G;\mathbb C)$, and
$(\mathcal L^*+p)v=0$, then\vspace{-2mm}
\begin{equation}
\label{eq:det-green}
\sum_{k=1}^J\widetilde q_k(p)v(a_k)
=\mathrm e^{-pT}(w(T),v)_{L^2(G)} +\int_0^T\!\int_{\partial G}
w(t,\xi)\mathrm e^{-pt}\mathcal B^*v(\xi)\,d\sigma\,dt.\vspace{-2mm}
\end{equation}
Moreover,\vspace{-4mm}
\begin{equation}
\label{eq:det-master}
\bigg|\sum_{k=1}^J\widetilde q_k(p)v(a_k)\bigg|
\leq C\mathrm e^{(-\operatorname{Re}p)_+T}
\|v\|_{H^2(G)}\|w\|_{\operatorname{obs}}.\vspace{-2mm}
\end{equation}
The constant is independent of $p$, the source locations, and the strengths.
\end{lemma}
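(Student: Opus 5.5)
The plan is to test the equation for $w$ against $\mathrm e^{-pt}v(x)$ and integrate by parts in space and time; this gives the identity \eqref{eq:det-green}. The estimate \eqref{eq:det-master} then follows by bounding the right-hand side with \cref{lem:terminal-estimate} and the trace theorem.

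\textbf{Step 1: the identity.} Since the right-hand side of the $w$-equation is a sum of Dirac masses, I would work with the mild/transposition formulation in $\mathcal X_{-2}$, or mollify the Dirac masses and pass to the limit. By \cref{prop:direct-wellposed}, $w$ is $H^2$ near $t=T$ and has a well-defined boundary trace. Formally, for smooth $w$, I multiply $\partial_t w+\mathcal L w=\sum_k q_k\delta_{a_k}$ by $\mathrm e^{-pt}v$ and integrate over $(0,T)\times G$.

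In time, $\int_0^T \mathrm e^{-pt}\partial_t w\,dt=\mathrm e^{-pT}w(T)+p\int_0^T\mathrm e^{-pt}w\,dt$, using $w(0)=0$.

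In space, Green's formula gives
\[
(\mathcal L w,v)=(w,\mathcal L^* v)+\int_{\partial G}\bigl(w\,\partial_\nu v-\partial_\nu w\,v+(A\cdot\nu)wv\bigr)\,d\sigma .
\]
With $\partial_\nu w=0$, the boundary integral reduces to $\int_{\partial G} w\,\mathcal B^*v\,d\sigma$.

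Combining the two, the interior terms collect into $\int_0^T\mathrm e^{-pt}(w,(\mathcal L^*+p)v)\,dt$, which vanishes by hypothesis. The source side gives $\sum_k\widetilde q_k(p)v(a_k)$; point evaluation makes sense because $H^2(G)\subset C(\overline G)$ for $n\le3$. This yields \eqref{eq:det-green}.

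For the rigorous version, I would approximate $\delta_{a_k}$ by smooth $\rho_\epsilon(\cdot-a_k)$, apply the identity to the smooth solutions $w_\epsilon$, and let $\epsilon\to0$. The convergence of each term is justified as follows:
\begin{itemize}
\item $\int\rho_\epsilon(\cdot-a_k)v\to v(a_k)$ by continuity of $v$;
\item $w_\epsilon\to w$ in $C([0,T];\mathcal X_{-2})$, with the boundary traces converging by the well-posedness and trace estimates of \cref{prop:direct-wellposed}.
\end{itemize}
The pairing $(w(T),v)$ is a genuine $L^2$ pairing because $w(T)\in H^2$.

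\textbf{Step 2: the bound.} For the terminal term, $|\mathrm e^{-pT}|=\mathrm e^{-\operatorname{Re}p\,T}\le \mathrm e^{(-\operatorname{Re}p)_+T}$. Since $q_k=0$ on $(T_0,T)$ and $T_1>T_0$, $w$ solves the homogeneous equation on $(T_1,T)$ and lies in $H^1(T_1,T;H^2(G))$ by \cref{prop:direct-wellposed}. \cref{lem:terminal-estimate} then gives $\|w(T)\|_{L^2}\le C\|w\|_{\operatorname{obs}}$, so the terminal term is at most $C\mathrm e^{(-\operatorname{Re}p)_+T}\|v\|_{L^2}\|w\|_{\operatorname{obs}}$.

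For the boundary term, $|\mathrm e^{-pt}|\le \mathrm e^{(-\operatorname{Re}p)_+T}$ for $t\in[0,T]$. Cauchy--Schwarz bounds it by $\mathrm e^{(-\operatorname{Re}p)_+T}\sqrt T\,\|w\|_{L^2(0,T;L^2(\partial G))}\|\mathcal B^*v\|_{L^2(\partial G)}$. The trace theorem gives $\|\partial_\nu v\|_{L^2(\partial G)}+\|v\|_{L^2(\partial G)}\le C\|v\|_{H^2(G)}$, with $C$ depending on $A$. This proves \eqref{eq:det-master}, and the constant involves only $G,\Gamma,T,T_1,A,\mu$, independent of $p$, the $a_k$, and the $q_k$.

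\textbf{Main obstacle.} The delicate point is the justification in Step 1: the identity must hold for the weak Dirac-source solution, and the boundary trace of $w$ on all of $(0,T)\times\partial G$, including times when sources are active, must be meaningful and stable under mollification. I would rely on the finiteness of $\|w\|_{\operatorname{obs}}$ and the trace estimate \eqref{eq:boundary-trace-estimate}. Because those estimates depend on the source locations, the approximation argument should be run with the locations $a_k$ held fixed.
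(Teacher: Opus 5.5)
Your proposal is correct and follows essentially the same route as the paper: test against $\mathrm e^{-pt}v$ to obtain the Green identity \eqref{eq:det-green}, bound the terminal term with \cref{lem:terminal-estimate}, and bound the boundary term with Cauchy--Schwarz and the trace bound, using $|\mathrm e^{-pt}|\leq\mathrm e^{(-\operatorname{Re}p)_+T}$ on $[0,T]$. The only difference is how the identity is justified for the Dirac-source solution: the paper cites \cite[Lemma~4.1]{HuangJinKianTriki2026} for the transposition solution, whereas you mollify. If you mollify, two points need an argument that you only sketch. First, full-time trace convergence $w_\epsilon\to w$ on $(0,T)\times\partial G$ does not follow directly from \cref{prop:direct-wellposed}, which only bounds fixed point-source configurations; it needs the cutoff and kernel argument of \cref{lem:point-source-approximation}. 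Second, $(w_\epsilon(T),v)\to(w(T),v)$ requires positive-lag smoothing to give $w_\epsilon(T)\to w(T)$ in $H^2(G)$, since $v$ generally lacks the adjoint boundary condition and cannot be paired in $\mathcal X_{-2}$.
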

\begin{proof}
Set $\widetilde v(t,\xi)=\mathrm e^{-pt}v(\xi)$. Since
$(\mathcal L^*+p)v=0$, we have
$-\partial_t\widetilde v+\mathcal L^*\widetilde v=0$.
Applying \cite[Lemma~4.1]{HuangJinKianTriki2026} with this
test function and using $w(0)=0$ gives \eqref{eq:det-green}.

We next estimate the two terms on its right-hand side. Since the sources
vanish after $T_0$, \cref{lem:terminal-estimate} gives
$\|w(T)\|_{H^2(G)}\leq C\|w\|_{\operatorname{obs}}$. Hence\vspace{-2mm}
\begin{equation}\label{9.8-eq6}
\bigl|\mathrm e^{-pT}(w(T),v)_{L^2(G)}\bigr|
\leq \mathrm e^{(-\operatorname{Re}p)_+T}
\|w(T)\|_{L^2(G)}\|v\|_{L^2(G)} \leq C\mathrm e^{(-\operatorname{Re}p)_+T}
\|v\|_{H^2(G)}\|w\|_{\operatorname{obs}}.\vspace{-2mm}
\end{equation}
For the boundary term, the Cauchy--Schwarz inequality and the trace
bound $\|\mathcal B^*v\|_{L^2(\partial G)}\leq C\|v\|_{H^2(G)}$ yield\vspace{-2mm}
\begin{equation*} 
\left|\int_0^T\!\!\int_{\partial G}\!
w(t,\xi)\mathrm e^{-pt}\mathcal B^*v(\xi) d\sigma dt\right|
\leq \sqrt T\,\mathrm e^{(-\operatorname{Re}p)_+T}
\|w\|_{L^2(0,T;L^2(\partial G))}
\|\mathcal B^*v\|_{L^2(\partial G)} \leq C\mathrm e^{(-\operatorname{Re}p)_+T}
\|v\|_{H^2(G)}\|w\|_{\operatorname{obs}}.\vspace{-2mm}
\end{equation*}
This, together with \eqref{9.8-eq6} and  \eqref{eq:det-green}, proves
\eqref{eq:det-master}.
\end{proof}

For the diffusion channel, define the source evaluation \vspace{-2mm}
\begin{equation}
\label{eq:source-evaluation-functional}
\mathfrak D[v](t):=\sum_{l=1}^{N_1}\gamma_l^1(t)v(y_l^1)
-\sum_{l=1}^{N_1}\gamma_l^2(t)v(y_l^2).\vspace{-2mm}
\end{equation}
\begin{lemma}[Stochastic boundary-to-source estimate]
\label{lem:sto-source-estimate}
Let $z$ be the solution of \eqref{eq:center-equation} under
Condition~\ref{con1}.
If $p\in\mathbb R$, $v\in H^2(G;\mathbb C)$, and
$(\mathcal L^*+p)v=0$, then\vspace{-2mm}
\begin{equation}
\label{eq:finite-source-green}
\int_0^T\mathrm e^{-pt}\mathfrak D[v](t)\,dW(t)
=\mathrm e^{-pT}(z(T),v)_{L^2(G)} 
+\int_0^T\!\int_{\partial G}
z\mathrm e^{-pt}\mathcal B^*v\,d\sigma\,dt,\qquad{\mathbb P}\mbox{-a.s.}\vspace{-2mm}
\end{equation}
and\vspace{-2mm}
\begin{equation}
\label{eq:master}
\|\mathfrak D[v]\|_{L^2(0,T)}
\leq C_p\|v\|_{H^2(G)}\|z\|_{\operatorname{obs}}.\vspace{-1mm}
\end{equation}
Here $C_p=C(p)$ is independent of the source locations and strengths.
\end{lemma}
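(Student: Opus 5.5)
The plan has three steps: a stochastic Green identity, an application of It\^o's isometry to its left-hand side, and a bound on the right-hand side by the same terminal-state and trace argument as in \cref{lem:det-source-estimate}. Because $p$ is real, the weight $\mathrm e^{-pt}$ lies between two positive constants depending only on $p$ and $T$. This is why we obtain the full $L^2(0,T)$ norm of $\mathfrak D[v]$ rather than only its Laplace transform.

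\textbf{Green identity.} Set $\widetilde v(t)=\mathrm e^{-pt}v$, so that $-\partial_t\widetilde v+\mathcal L^*\widetilde v=0$. Apply It\^o's formula to $(z(t),\widetilde v(t))_{L^2(G)}$, using $d z=-\mathcal L z\,dt+(\cdots)\,dW$. Integrating by parts in space, with $\partial_\nu z=0$, turns $(\mathcal L z,v)$ into $(z,\mathcal L^*v)$ plus the boundary term $-\int_{\partial G}z\,\mathcal B^*v\,d\sigma$. The stochastic term pairs the Dirac masses with $v$ and produces $\mathfrak D[v](t)\,dW$. Integrating over $(0,T)$ with $z(0)=0$ gives \eqref{eq:finite-source-green}.

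Since $z$ lives only in $\mathcal X_{-2}$, I would make this rigorous through the mild formulation. One way is to mollify the Dirac masses (or use Yosida approximations of $\mathcal A$), apply It\^o's formula to the regularized strong solutions, and pass to the limit. The limit uses $v\in H^2\subset C(\overline G)$ for $n\le3$, which gives point evaluation, and the boundary regularity from \cref{prop:direct-wellposed}. An alternative is the stochastic analogue of \cite[Lemma~4.1]{HuangJinKianTriki2026}. This justification is the main technical obstacle; everything after it is routine.

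\textbf{It\^o's isometry.} Take the second moment of the left-hand side of \eqref{eq:finite-source-green}. Since $\mathfrak D[v]$ is deterministic (complex-valued; apply the isometry to its real and imaginary parts separately), It\^o's isometry gives
\[
\mathbb E\Bigl|\int_0^T\mathrm e^{-pt}\mathfrak D[v]\,dW\Bigr|^2=\int_0^T\mathrm e^{-2pt}|\mathfrak D[v](t)|^2\,dt\ \ge\ \mathrm e^{-2|p|T}\|\mathfrak D[v]\|_{L^2(0,T)}^2 .
\]

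\textbf{Bounding the right-hand side.} For the terminal term, the sources vanish after $T_0<T_1$, so pathwise $z$ solves the homogeneous equation on $(T_1,T)$. The noise term vanishes there because $\gamma_l^j=0$. Moreover, $z\in H^1(T_1,T;H^2(G))$ almost surely by \cref{prop:direct-wellposed}. Hence \cref{lem:terminal-estimate} applies pathwise. Squaring and taking expectations gives $\mathbb E\|z(T)\|_{H^2}^2\le C\|z\|_{\operatorname{obs}}^2$, and therefore
\[
\mathbb E|\mathrm e^{-pT}(z(T),v)|^2\le C_p\|v\|_{H^2}^2\|z\|_{\operatorname{obs}}^2 .
\]
For the boundary term, Cauchy--Schwarz in $(t,\xi)$ together with the trace bound $\|\mathcal B^*v\|_{L^2(\partial G)}\le C\|v\|_{H^2}$ gives
\[
\mathbb E\Bigl|\int_0^T\!\int_{\partial G}z\,\mathrm e^{-pt}\mathcal B^*v\,d\sigma\,dt\Bigr|^2\le C_p\|v\|_{H^2}^2\,\mathbb E\|z\|_{L^2(0,T;L^2(\partial G))}^2 .
\]
Combining these bounds with the isometry and taking square roots yields \eqref{eq:master}. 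The constant $C_p$ depends only on $p$ and the suppressed data, and not on the locations or strengths, because these enter only through $\mathfrak D[v]$ and $z$.
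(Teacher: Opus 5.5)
Your proposal is correct and follows the paper's proof essentially step for step. You mollify the Dirac masses, apply It\^o's formula and Green's formula to $\mathrm e^{-pt}(z_\epsilon(t),v)_{L^2(G)}$, and pass to the limit; you then combine the pathwise terminal-state estimate, Cauchy--Schwarz with the trace bound on the boundary term, It\^o's isometry, and removal of the weight via $\mathrm e^{-pt}\geq\mathrm e^{-|p|T}$. The one step you leave thin is the limit passage in the boundary term. It requires $z_\epsilon\to z$ in $L^2(\Omega\times(0,T);L^2(\partial G))$, which comes from off-source kernel bounds uniform in $\epsilon$ (the paper's \cref{lem:point-source-approximation}), not merely from the boundary regularity of $z$ itself in \cref{prop:direct-wellposed}.
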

\vspace{-2mm}
\begin{proof}
We regularize the point sources using
\vspace{-2mm}
\begin{equation}
\label{eq:source-mollifier}
\rho(\xi)=
\begin{cases}
c_n\exp\bigl(-1/(1-|\xi|^2)\bigr),&|\xi|<1,\\
0,&|\xi|\geq1,
\end{cases}
\qquad
\rho_\epsilon^y(\xi)=\epsilon^{-n}
\rho\Bigl(\frac{\xi-y}{\epsilon}\Bigr),\vspace{-2mm}
\end{equation}
where $c_n>0$ is chosen so that $\int_{\mathbb R^n}\rho(\xi)\,d\xi=1$.
For sufficiently small $\epsilon>0$, all $\rho_\epsilon^{y_l^j}$ are
supported in $G$. Let $z_\epsilon$ solve\vspace{-2mm}
\begin{equation}
\label{eq:regularized-center-equation}
\begin{cases}
\begin{aligned}
&d z_\epsilon+\mathcal Lz_\epsilon\,dt
=\Bigl(\sum_{l=1}^{N_1}\gamma_l^1(t)\rho_\epsilon^{y_l^1}
-\sum_{l=1}^{N_1}\gamma_l^2(t)\rho_\epsilon^{y_l^2}\Bigr)\,dW(t)
&&\text{in }(0,T)\times G,\\
&\partial_\nu z_\epsilon=0
&&\text{on }(0,T)\times\partial G,\\
&z_\epsilon(0)=0
&&\text{in }G.
\end{aligned}
\end{cases}\vspace{-2mm}
\end{equation}
Set\vspace{-3mm}
\begin{equation*}
\mathfrak D_\epsilon[v](t)
:=\sum_{l=1}^{N_1}\gamma_l^1(t)(\rho_\epsilon^{y_l^1},v)_{L^2(G)}
-\sum_{l=1}^{N_1}\gamma_l^2(t)(\rho_\epsilon^{y_l^2},v)_{L^2(G)}.\vspace{-2mm}
\end{equation*}
Applying  It\^o's formula and Green's formula to this
equation yields\vspace{-2mm}
\begin{equation*}
d\bigl(\mathrm e^{-pt}(z_\epsilon(t),v)_{L^2(G)}\bigr)
=\mathrm e^{-pt}\mathfrak D_\epsilon[v](t)\,dW(t)
-\mathrm e^{-pt}\int_{\partial G}z_\epsilon(t,\xi)
\mathcal B^*v(\xi)\,d\sigma\,dt.\vspace{-2mm}
\end{equation*}
Here the volume term vanishes because $(\mathcal L^*+p)v=0$, and
the deterministic test function contributes no quadratic covariation.
Integrating in time and using $z_\epsilon(0)=0$ gives\vspace{-2mm}
\begin{equation*}
\int_0^T\mathrm e^{-pt}\mathfrak D_\epsilon[v](t)\,dW(t)
=\mathrm e^{-pT}(z_\epsilon(T),v)_{L^2(G)}
+\int_0^T\!\int_{\partial G}
z_\epsilon(t,\xi)\mathrm e^{-pt}\mathcal B^*v(\xi)\,d\sigma\,dt.\vspace{-2mm}
\end{equation*}
By \cref{lem:point-source-approximation} in Appendix~\ref{app:direct},
all terms converge in $L^2(\Omega;\mathbb C)$ as $\epsilon\to0$.
Passing to the limit yields
\eqref{eq:finite-source-green}.

We now estimate the source evaluation. Since the sources vanish after
$T_0$, $z(\omega)$ satisfies the homogeneous parabolic equation on
$(T_1,T)\times G$ for almost every $\omega$.
Applying \cref{lem:terminal-estimate} to each sample path, with a constant
independent of $\omega$, and taking $L^2(\Omega)$ norms gives\vspace{-2mm}
\begin{equation*}
\|z(T)\|_{L^2(\Omega;L^2(G))}
\leq C\bigl(
\|z\|_{L^2_{\mathbb F}(T_1,T;H^1(\Gamma))}
+\|z\|_{L^2_{\mathbb F}(\Omega;H^1(T_1,T;L^2(\Gamma)))}\bigr) 
\leq C\|z\|_{\operatorname{obs}}.\vspace{-2mm}
\end{equation*}
Using It\^o's isometry, the Cauchy--Schwarz inequality, and the trace
bound $\|\mathcal B^*v\|_{L^2(\partial G)}\leq C\|v\|_{H^2(G)}$, we obtain\vspace{-4mm}
$$
\begin{aligned}
\|\mathrm e^{-p\cdot}\mathfrak D[v]\|_{L^2(0,T)}
&=\Big(\mathbb E\Big|\int_0^T\mathrm e^{-pt}
\mathfrak D[v](t)\,dW(t)\Big|^2\Big)^{1/2}\\
&\leq \mathrm e^{(-p)_+T}\Bigl(
\|z(T)\|_{L^2(\Omega;L^2(G))}\|v\|_{L^2(G)}
+\Bigl\|\int_0^T\|z(t)\|_{L^2(\partial G)}\,dt\Bigr\|_{L^2(\Omega)}
\|\mathcal B^*v\|_{L^2(\partial G)}\Bigr)\\
&\leq \mathrm e^{(-p)_+T}\Bigl(
\|z(T)\|_{L^2(\Omega;L^2(G))}\|v\|_{L^2(G)} 
+\sqrt T\,\|z\|_{L^2_{\mathbb F}(0,T;L^2(\partial G))}
\|\mathcal B^*v\|_{L^2(\partial G)}\Bigr)\\
&\leq C_p\|v\|_{H^2(G)}\|z\|_{\operatorname{obs}}.
\end{aligned}\vspace{-2mm}
$$
Finally, $\mathrm e^{-pt}\geq\mathrm e^{-p_+T}$ on $[0,T]$, so removing
the time weight gives \eqref{eq:master}.
\end{proof}

\vspace{-4mm}

\section{Proof of the single-source theorem}
\label{sec:single-proof}

Throughout this section, we take one source in the channel under
consideration, assume Condition~\ref{con1}, and use the notation of
\cref{thm:main-single}.
The same explicit spatial probes are used in both channels. The proofs differ
only in the temporal coercivity supplied by the two boundary-to-source
estimates.

\vspace{-2mm}

\subsection{Explicit adjoint probes}

We construct a location probe that vanishes at one candidate source point
and an amplitude probe normalized to one at that point. The location
probe has different forms in dimensions $n=2,3$ and $n=1$.

For $p\in\mathbb C$ and $v(\xi)=\mathrm e^{-A\cdot\xi/2}h(\xi)$,
direct differentiation gives\vspace{-2mm}
\begin{equation}
\label{eq:gauge-computation}
(\mathcal L^*+p)v
=\mathrm e^{-A\cdot\xi/2}
\biggl[-\Delta h+\biggl(p+\mu+\frac{|A|^2}{4}\biggr)h\biggr].\vspace{-2mm}
\end{equation}
Choose a  $\kappa$ satisfying\vspace{-4mm}
\begin{equation}
\label{eq:kappa}
\kappa^2=p+\mu+\frac{\lvert A\rvert^2}{4}.\vspace{-2mm}
\end{equation}
\paragraph{Dimensions $n=2,3$.}
For each unit vector $\varrho\in\mathbb R^n$, choose a unit
vector $f_{\varrho}\perp\varrho$. For $a,b\in G$, define\vspace{-2mm}
\begin{equation}
\label{eq:location-test}
v_{\varrho,a,b}^{p}(\xi)
=\mathrm{e}^{-\frac12 A\cdot(\xi-a)}
(\varrho\cdot(\xi-b))
\mathrm{e}^{\kappa f_{\varrho}\cdot(\xi-a)}.\vspace{-2mm}
\end{equation}
By \eqref{eq:gauge-computation} and $f_{\varrho}\perp\varrho$,\vspace{-3mm}
\begin{equation*}
(\mathcal L^*+p)v_{\varrho,a,b}^p=0,
\qquad v_{\varrho,a,b}^p(b)=0,
\qquad v_{\varrho,a,b}^p(a)=\varrho\cdot(a-b).\vspace{-2mm}
\end{equation*}
For $a\ne b$, choosing $\varrho=(a-b)/|a-b|$ gives
$v_{\varrho,a,b}^p(a)=|a-b|$.

\paragraph{Dimension $n=1$.}
After a translation, write $G=(0,\ell)$. For $a,b\in G$, define\vspace{-2mm}
\begin{equation}
\label{eq:location-test-1d}
\vartheta_{a,b}^{p}(\xi)
=\mathrm{e}^{-\frac A2(\xi-a)}
\begin{cases}
\dfrac{\sinh(\kappa(\xi-b))}{\kappa},&\kappa\ne0,\\[2mm]
\xi-b,&\kappa=0.
\end{cases}\vspace{-2mm}
\end{equation}
This probe satisfies\vspace{-2mm}
\begin{equation*}
(\mathcal L^*+p)\vartheta_{a,b}^p=0,
\qquad \vartheta_{a,b}^p(b)=0.\vspace{-2mm}
\end{equation*}
For a uniform lower bound at $a$, let $\mathfrak P$ be a compact subset of\vspace{-2mm}
\begin{equation}
\label{eq:p-compact-1d}
\Bigl\{p\in\mathbb C:
\operatorname{Re}\Bigl(p+\mu+\frac{A^2}{4}\Bigr)>0\Bigr\},\vspace{-2mm}
\end{equation}
and choose $\operatorname{Re}\kappa>0$. The function
$\sinh(\kappa r)/(\kappa r)$, continued by $1$ at $r=0$, is continuous
and nonzero on the compact set of admissible roots and
$r\in[-\operatorname{diam}G,\operatorname{diam}G]$. Hence\vspace{-2mm}
\begin{equation}
\label{eq:one-dimensional-probe-lower}
|\vartheta_{a,b}^p(a)|\geq c_{\mathfrak P}|a-b|,
\qquad a,b\in G,\quad p\in\mathfrak P,\vspace{-2mm}
\end{equation}
where $c_{\mathfrak P}>0$ depends only on $G,A,\mu,\mathfrak P$.
\vspace{-2mm}
\paragraph{A common amplitude probe.}
For every $n\in\{1,2,3\}$, $p\in\mathbb C$, $b\in G$, and unit vector
$\varpi\in\mathbb R^n$, define\vspace{-2mm}
\begin{equation}
\label{eq:amplitude-test}
\psi_{b,\varpi}^{p}(\xi)
=\mathrm{e}^{-\frac12A\cdot(\xi-b)}
\mathrm{e}^{\kappa\varpi\cdot(\xi-b)}.\vspace{-2mm}
\end{equation}
We take $\varpi=1$ when $n=1$. By \eqref{eq:gauge-computation}, 
$
(\mathcal L^*+p)\psi_{b,\varpi}^p=0$ and $\psi_{b,\varpi}^p(b)=1$. 
For each probe in its stated dimension, differentiating the explicit
formula at most twice gives \vspace{-2mm}
\begin{equation}
\label{eq:probe-growth}
\|\varphi\|_{H^2(G)} \leq C(1+|\kappa|)^2\mathrm e^{C|\kappa|},
\qquad \varphi\in\{v_{\varrho,a,b}^p,\,\vartheta_{a,b}^p,\,\psi_{b,\varpi}^p\}.\vspace{-2mm}
\end{equation}
The mean-value formula for the exponential also yields\vspace{-2mm}
\begin{equation}
\label{eq:probe-close}
|\psi_{b,\varpi}^p(a)-1|
 \leq C(1+|\kappa|)\mathrm e^{C|\kappa|}|a-b|.\vspace{-2mm}
\end{equation}
Here $C$ depends only on $G$ and $A$, uniformly in the unit vectors.

\vspace{-2mm} 

\subsection{Proof of the single-source stability estimates}

\begin{proof}[Proof of \cref{thm:main-single}]
Set $\varepsilon=\|v\|_{\operatorname{obs}}$, with $v,\bar v,z$ as in
\Cref{sec:forward}, and fix
$p_0=\max\{0,-\mu-|A|^2/4\}+2$.
By \eqref{eq:channel-contraction}, both channel errors are at most
$\varepsilon$.

\medskip\noindent
\textbf{Part I: drift channel.}
In this part, $C$ may additionally depend on $s$, $M$, and $r$.
Take $w=\bar v$ in \cref{lem:det-source-estimate}.

\smallskip\noindent
\emph{Step 1: selection of a nondegenerate frequency.}
The zero-extension map $H^s(0,T)\to H^s(\mathbb R)$ is bounded for
$0<s<1/2$ \cite[Lemma~3.32 and Theorem~3.33]{McLean2000}. 
Thus, by Assumption~\ref{ass:lambda},
$f_1(t)=\mathrm e^{-p_0t}\lambda^1(t)\mathbf1_{(0,T)}(t)$ satisfies
$\|f_1\|_{H^s(\mathbb R)}\leq CM$ and
$\|f_1\|_{L^2(\mathbb R)}\geq\mathrm e^{-p_0T}r$.

With the Fourier convention
$\widehat f_1(\tau)=\int_{\mathbb R}\mathrm e^{-\mathrm i\tau t}f_1(t)\,dt$,
we have $\widehat f_1(\tau)=\widetilde{\lambda^1}(p_0+\mathrm i\tau)$.
Plancherel's theorem and the $H^s$ tail bound give\vspace{-3mm}
\begin{equation*}
\int_{-R}^{R}|\widehat f_1(\tau)|^2\,d\tau
\geq 2\pi\mathrm e^{-2p_0T}r^2-CM^2R^{-2s},\qquad R\geq1.\vspace{-2mm}
\end{equation*}
Choosing $R_0\geq1$ so that
$CM^2R_0^{-2s}\leq\pi\mathrm e^{-2p_0T}r^2$ yields \vspace{-3mm}
\begin{equation*}
\sup_{|\tau|\leq R_0}|\widetilde{\lambda^1}(p_0+\mathrm i\tau)|
\geq\left(\frac{\pi\mathrm e^{-2p_0T}r^2}{2R_0}\right)^{1/2}=:r_0>0.\vspace{-2mm}
\end{equation*}
Here $R_0$ and $r_0$ depend only on $p_0,T,s,M,r$.

Choose $\tau_0\in[-R_0,R_0]$ such that\vspace{-2mm}
\begin{equation}
\label{eq:tau0-lower}
\bigl|\widetilde{\lambda^1}(p_0+\mathrm{i}\tau_0)\bigr|\geq r_0/2.\vspace{-2mm}
\end{equation}
Set $p_\dagger=p_0+\mathrm{i}\tau_0$.

\smallskip\noindent
\emph{Step 2: location recovery.}
The location estimate is immediate when $x^1=x^2$. Assume $x^1\ne x^2$.

\textbf{Case 1.} $n=2,3$. Choose $\varrho=(x^1-x^2)/|x^1-x^2|$
and use $v_{\varrho,x^1,x^2}^{p_\dagger}$. By
\cref{eq:det-master,eq:probe-growth},\vspace{-2mm}
\begin{equation}\label{9.8-eq1}
|\widetilde{\lambda^1}(p_\dagger)|\,|x^1-x^2|\leq C\varepsilon,\vspace{-2mm}
\end{equation}
with $C$ uniform for $\tau_0\in[-R_0,R_0]$.

\textbf{Case 2.} $n=1$. The compact frequency segment \(\mathfrak P_0=\{p_0+\mathrm{i}\tau:|\tau|\leq R_0\}\) lies in the set in \eqref{eq:p-compact-1d} by the choice of \(p_0\). Using \(\vartheta_{x^1,x^2}^{p_\dagger}\), which vanishes at \(x^2\), \cref{lem:det-source-estimate} together with \cref{eq:one-dimensional-probe-lower,eq:probe-growth} yields\vspace{-2mm}
\begin{equation}\label{9.8-eq2}
c_{\mathfrak P_0}|\widetilde{\lambda^1}(p_\dagger)|\,|x^1-x^2|
\leq C\varepsilon.\vspace{-2mm}
\end{equation}
In both \eqref{9.8-eq1} and \eqref{9.8-eq2}, \eqref{eq:tau0-lower} gives\vspace{-2mm}
\begin{equation}
\label{eq:det-stability-location}
|x^1-x^2|\leq C\varepsilon.\vspace{-2mm}
\end{equation}

\smallskip\noindent
\emph{Step 3: low-frequency strength estimate.}
Fix $R\geq1$ and $|\tau|\leq R$, and set $p=p_0+\mathrm{i}\tau$.
Use the common amplitude probe with $\varpi$ fixed as in its construction,
and set
$\beta(p)=\psi_{x^2,\varpi}^{p}(x^1)$. Since
$\psi_{x^2,\varpi}^{p}(x^2)=1$, applying \eqref{eq:det-master} yields\vspace{-2mm}
\begin{equation}
\label{eq:det-frequency-first}
\bigl|\beta(p)\widetilde{\lambda^1}(p)-\widetilde{\lambda^2}(p)\bigr|
\leq C\|\psi_{x^2,\varpi}^{p}\|_{H^2(G)}
\varepsilon.\vspace{-2mm}
\end{equation}
Furthermore,
$|\widetilde{\lambda^1}(p)|\leq\sqrt T\|\lambda^1\|_{L^2(0,T)}\leq C M$.
By \eqref{eq:probe-close} and \eqref{eq:det-stability-location},\vspace{-2mm}
\begin{equation*}
|\beta(p)-1|\,|\widetilde{\lambda^1}(p)|
\leq C(1+|\kappa|)\mathrm{e}^{C|\kappa|}
|x^1-x^2|M
\leq C(1+|\kappa|)\mathrm{e}^{C|\kappa|}\varepsilon.\vspace{-2mm}
\end{equation*}
Combining this estimate with \eqref{eq:det-frequency-first} and noting that
$|\kappa|\leq C(1+R)^{1/2}$ for $|\tau|\leq R$, we obtain\vspace{-2mm}
\begin{equation}
\label{eq:det-frequency}
\sup_{|\tau|\leq R}
\bigl|\widetilde{(\lambda^1-\lambda^2)}(p_0+\mathrm{i}\tau)\bigr|
\leq C(1+R)^2\mathrm{e}^{C\sqrt R}\varepsilon.\vspace{-2mm}
\end{equation}

\smallskip\noindent
\emph{Step 4: high-frequency tail and optimization.}
Extend $f(t)=\mathrm{e}^{-p_0t}\bigl(\lambda^1(t)-\lambda^2(t)\bigr)$
by zero outside $(0,T)$.
Then $\widehat f(\tau)=(\widetilde{\lambda^1-\lambda^2})(p_0+\mathrm i\tau)$.
Since $\lambda^1-\lambda^2\in H_0^1(0,T)$, boundedness of zero extension gives\vspace{-2mm}
\begin{equation}
\label{eq:f-H1}
\|f\|_{H^1(\mathbb R)}\leq C M.\vspace{-2mm}
\end{equation}
Plancherel's theorem, \eqref{eq:det-frequency}, and \eqref{eq:f-H1} imply that,
for $R\geq1$,\vspace{-2mm}
\begin{align}
\label{eq:det-optimization}
\|\lambda^1-\lambda^2\|_{L^2(0,T)}^2
&\leq C\int_{\mathbb R}|\widehat f(\tau)|^2\,d\tau \leq C\Bigl(
2R\sup_{|\tau|\leq R}|\widehat f(\tau)|^2
+\int_{|\tau|>R}|\widehat f(\tau)|^2\,d\tau\Bigr)\notag\\
&\leq C\bigl(
R(1+R)^4\mathrm{e}^{C\sqrt R}\varepsilon^2
+R^{-2}M^2\bigr)\\
&\leq C\bigl(
\mathrm{e}^{C_1\sqrt R}\varepsilon^2+R^{-2}
\bigr).\notag
\end{align} 
Here $C$ and $C_1$ are independent of $R$ and $\varepsilon$.

For $0<\varepsilon<\varepsilon_*$, choose
$\sqrt R=C_1^{-1}\log(\varepsilon^{-1})$, where the fixed threshold
$0<\varepsilon_*\leq1/2$ is small enough that $R\geq1$.
Then \eqref{eq:det-optimization} gives\vspace{-2mm}
\begin{equation*}
\|\lambda^1-\lambda^2\|_{L^2(0,T)}
\leq C[\log(\varepsilon^{-1})]^{-2}
\leq C\Phi(\varepsilon),\vspace{-1mm}
\end{equation*}
where we used \eqref{eq:log-modulus} and
$\log(3+\varepsilon^{-1})\leq3\log(\varepsilon^{-1})$
for $0<\varepsilon\leq1/2$.
If $\varepsilon=0$, letting $R\to\infty$ gives $\lambda^1=\lambda^2$,
consistent with $\Phi(0)=0$.
For $\varepsilon\geq\varepsilon_*$, we have
$\Phi(\varepsilon)\geq\Phi(\varepsilon_*)>0$, so Assumption~\ref{ass:lambda} gives \vspace{-2mm}
\begin{equation*}
\|\lambda^1-\lambda^2\|_{L^2(0,T)}
\leq M\leq\frac{M}{\Phi(\varepsilon_*)}\Phi(\varepsilon).\vspace{-2mm}
\end{equation*}
Increasing $C$ if necessary proves \eqref{eq:main-drift-amplitude}
for all $\varepsilon\geq0$.
Together with \eqref{eq:det-stability-location}, this proves part~(a).

\medskip\noindent
\textbf{Part II: diffusion channel.}
In this part, $C$ may additionally depend on $M$ and $r$.
At $p=p_0$, the square root $\kappa_0>0$ in \eqref{eq:kappa} is real.

\smallskip\noindent
\emph{Step 1: location recovery.}
The location estimate is immediate when $y^1=y^2$. Assume $y^1\ne y^2$.

\textbf{Case 1.} $n=2,3$. Choose
$\varrho=(y^1-y^2)/|y^1-y^2|$ and use $v_{\varrho,y^1,y^2}^{p_0}$.
It vanishes at $y^2$ and equals $|y^1-y^2|$ at $y^1$, so
\cref{lem:sto-source-estimate,eq:probe-growth} give\vspace{-2mm}
\begin{equation}\label{9.8-eq3}
|y^1-y^2|\|\gamma^1\|_{L^2(0,T)}\leq C\varepsilon.\vspace{-1mm}
\end{equation}

\textbf{Case 2.} $n=1$. Use the probe
$\vartheta_{y^1,y^2}^{p_0}$ from \eqref{eq:location-test-1d}.  It vanishes at
$y^2$. Applying \cref{lem:sto-source-estimate} with
\eqref{eq:probe-growth} and \eqref{eq:one-dimensional-probe-lower},
where $\mathfrak P=\{p_0\}$, gives\vspace{-2mm}
\begin{equation}\label{9.8-eq4}
c_{\{p_0\}}\lvert y^1-y^2\rvert
\|\gamma^1\|_{L^2(0,T)}
\leq C\varepsilon.\vspace{-1mm}
\end{equation}
In both \eqref{9.8-eq3} and \eqref{9.8-eq4}, the lower bound in \eqref{eq:gamma-bounds} yields\vspace{-2mm}
\begin{equation}
\label{eq:diff-location}
|y^1-y^2|\leq C\varepsilon.\vspace{-2mm}
\end{equation}

\smallskip\noindent
\emph{Step 2: strength recovery.}
Use $\psi_{y^2,\varpi}^{p_0}$ from \eqref{eq:amplitude-test}, with
$\varpi$ fixed as in its construction, and set
$\beta=\psi_{y^2,\varpi}^{p_0}(y^1)$. \Cref{lem:sto-source-estimate} gives\vspace{-2mm}
\begin{equation*}
\|\beta\gamma^1-\gamma^2\|_{L^2(0,T)}
\leq C\varepsilon.\vspace{-2mm}
\end{equation*}
By \eqref{eq:probe-close}, \eqref{eq:diff-location}, and the upper bound in
\eqref{eq:gamma-bounds},\vspace{-2mm}
\begin{equation*}
\|\gamma^1-\gamma^2\|_{L^2(0,T)}
\leq\|\beta\gamma^1-\gamma^2\|_{L^2(0,T)}
+\lvert \beta-1\rvert\|\gamma^1\|_{L^2(0,T)} \leq C\varepsilon+C\lvert y^1-y^2\rvert M
\leq C\varepsilon.\vspace{-2mm}
\end{equation*}
Combining this estimate with \eqref{eq:diff-location} proves
\eqref{eq:main-diffusion}.
\end{proof}

\vspace{-3mm}

\section{Finite-source stability}
\label{sec:multiple}

\vspace{-2mm}

This section establishes that probes isolating a source or a matched pair provide location‑coefficient stability, and then constructs such probes via polynomials in dimensions two and three, and via spectral synthesis for one‑dimensional diffusion sources. Here $N$ denotes the source count, with $N_0$ for drift and $N_1$ for diffusion.

\vspace{-2mm}

\subsection{From annihilating probes to stable matching}
\label{sec:matching-principle}

Consider two configurations $(a_k^j)_{k=1}^N$ in $G$, $j=1,2$.
To locate a target source, we cancel the contributions from all other
candidate points. After matching the locations, we instead retain the
two sources in each matched pair to compare their coefficients.
Write $c_k^j\in E$ for the coefficients, where $E$ is a complex normed
space. In the applications, $E=\mathbb C$ for drift moments and
$E=L^2(0,T;\mathbb C)$ for diffusion strengths. 

For a scalar function
$V:G\to\mathbb C$, set\vspace{-3mm}
\begin{equation*}
\mathfrak S[V]:=\sum_{k=1}^Nc_k^1V(a_k^1)
-\sum_{k=1}^Nc_k^2V(a_k^2).\vspace{-2mm}
\end{equation*}
Let $\varepsilon\geq0$ denote the observation error. The two types of
probe are specified as follows.

\smallskip\noindent
\emph{Location probes.}
For a target with $\min\limits_{1\leq m\leq N}|a_l^1-a_m^2|>0$,
we seek $H_l$ satisfying\vspace{-2mm}
\begin{equation}
\label{eq:abstract-product-probe}
\begin{cases}
\begin{aligned}
& H_l(a_k^1)=0\quad(k\ne l),\qquad
H_l(a_m^2)=0\quad(1\leq m\leq N),\\
&|H_l(a_l^1)|\geq c_0
\prod_{\substack{1\leq k\leq N\\k\ne l}}|a_l^1-a_k^1|
\prod_{m=1}^N|a_l^1-a_m^2|,\\
&\|\mathfrak S[H_l]\|_E\leq C_0\varepsilon.
\end{aligned}
\end{cases}\vspace{-2mm}
\end{equation}
Here $c_0,C_0>0$ are independent of $l$ and the configurations.
The zeros leave only $\mathfrak S[H_l]=c_l^1H_l(a_l^1)$, so the product
lower bound connects the observation error to the distances from the
target. The zero list contains the other $N-1$ points of the first
configuration and all $N$ points of the second, giving $2N-1$ entries.

\smallskip\noindent
\emph{Coefficient probes.}
After choosing a matching permutation $\sigma$, we seek probes $V_l$
that retain only the matched pair and equal $1$ at the reference point.
With $\|V\|_{\operatorname{Lip}(G)}=\|V\|_{L^\infty(G)}+
\sup_{\{a, b\in G,a\neq b\}}|V(a)-V(b)|/|a-b|$, we require\vspace{-2mm}
\begin{equation}
\label{eq:abstract-normalized-probe}
\begin{cases}
\begin{aligned}
&V_l(a_l^1)=1,\quad V_l(a_k^1)=0\ (k\ne l),\quad
V_l(a_m^2)=0\ (m\ne\sigma(l)),\\
&\|\mathfrak S[V_l]\|_E\leq C_1\varepsilon,\qquad
\|V_l\|_{\operatorname{Lip}(G)}\leq C_1.
\end{aligned}
\end{cases}\vspace{-1mm}
\end{equation}
Here $C_1>0$ is uniform in $l$ and the configurations.
The Lipschitz bound makes $V_l(a_{\sigma(l)}^2)$ close to $1$ when the
matched locations are close. This zero list omits both points of the
matched pair and has $2N-2$ entries. Either list may contain repeated
nodes when the configurations overlap.

The following lemma shows that these probe properties suffice for
stable recovery.

\begin{lemma}[Localization and matching]
\label{lem:localization-matching}
Assume that $|a_k^j-a_l^j|\geq\delta>0$ for $k\ne l$, $j=1,2$,
and $\|c_k^1\|_E\geq r>0$ for every $k$.
If every target with $\min\limits_{1\leq m\leq N}|a_l^1-a_m^2|>0$ admits a probe satisfying
\eqref{eq:abstract-product-probe}, then there exist
$C=C(G,N,\delta,r,c_0,C_0)>0$ and a permutation
$\sigma$ such that\vspace{-2mm}
\begin{equation}
\label{eq:abstract-location-result}
\max_{1\leq l\leq N}|a_l^1-a_{\sigma(l)}^2|\leq C\varepsilon.\vspace{-3mm}
\end{equation}
If, in addition, $\sum\limits_{k=1}^N(\|c_k^1\|_E+\|c_k^2\|_E)\leq M$
and every permutation with
$\max_l|a_l^1-a_{\sigma(l)}^2|<\delta/4$ admits probes satisfying
\eqref{eq:abstract-normalized-probe}, then the same permutation satisfies\vspace{-2mm}
\begin{equation}
\label{eq:abstract-coefficient-result}
\max_{1\leq l\leq N}\|c_l^1-c_{\sigma(l)}^2\|_E\leq C\varepsilon,\vspace{-2mm}
\end{equation}
where $C$ may additionally depend on $M$ and $C_1$.
\end{lemma}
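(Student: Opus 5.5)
The argument has two stages: localization, then matching, then coefficient comparison. Throughout, write $d_l=\min_m|a_l^1-a_m^2|$.

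\emph{Step 1: distance bound for each target.} Fix $l$ with $d_l>0$. The zeros of $H_l$ in \eqref{eq:abstract-product-probe} kill every term of $\mathfrak S[H_l]$ except $c_l^1H_l(a_l^1)$. Hence
\[
r\,c_0\prod_{k\ne l}|a_l^1-a_k^1|\prod_{m}|a_l^1-a_m^2|\le \|c_l^1\|_E|H_l(a_l^1)|\le C_0\varepsilon .
\]
Separation in configuration $1$ gives $\prod_{k\ne l}|a_l^1-a_k^1|\ge\delta^{N-1}$. For the second product, let $m_*$ attain $d_l$. If $m\ne m_*$, then
\[
|a_l^1-a_m^2|\ge|a_m^2-a_{m_*}^2|-d_l\ge\delta-d_l .
\]
So if $d_l\le\delta/2$, every factor with $m\ne m_*$ is at least $\delta/2$. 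If $d_l>\delta/2$, every factor is at least $\delta/2$. In either case
\[
\prod_m|a_l^1-a_m^2|\ge(\delta/2)^{N-1}\min\{d_l,\delta/2\},
\]
and therefore $\min\{d_l,\delta/2\}\le C\varepsilon$ with $C=C(N,\delta,r,c_0,C_0)$. The same inequality holds trivially when $d_l=0$.

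\emph{Step 2: building the permutation.} First suppose $C\varepsilon<\delta/4$. Then the bound from Step 1 cannot be satisfied by the alternative $d_l>\delta/2$, so $d_l\le C\varepsilon<\delta/4$ for every $l$. Set $\sigma(l)=m_*(l)$. This map is injective: if $\sigma(l)=\sigma(l')$ with $l\ne l'$, then
\[
|a_l^1-a_{l'}^1|\le d_l+d_{l'}<\delta/2,
\]
contradicting separation. Being an injective map of a finite set to itself, $\sigma$ is a permutation, and \eqref{eq:abstract-location-result} holds.

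Now suppose $C\varepsilon\ge\delta/4$. Take any permutation $\sigma$. Then $|a_l^1-a_{\sigma(l)}^2|\le\operatorname{diam}G\le (4\operatorname{diam}G/\delta)\,C\varepsilon$. This is where the dependence of the constant on $G$ enters.

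\emph{Step 3: coefficients.} Keep the small-$\varepsilon$ regime, where $\max_l|a_l^1-a_{\sigma(l)}^2|<\delta/4$ and the probes $V_l$ of \eqref{eq:abstract-normalized-probe} exist. Their zeros reduce the functional to
\[
\mathfrak S[V_l]=c_l^1-c_{\sigma(l)}^2V_l(a_{\sigma(l)}^2).
\]
Using $V_l(a_l^1)=1$, the Lipschitz bound gives
\[
|V_l(a_{\sigma(l)}^2)-1|\le C_1|a_l^1-a_{\sigma(l)}^2|\le C\varepsilon .
\]
Combining these,
\[
\|c_l^1-c_{\sigma(l)}^2\|_E\le C_1\varepsilon+M\,C_1C\varepsilon .
\]
In the large-$\varepsilon$ regime ($C\varepsilon\ge\delta/4$) the bound is trivial, since $\|c_l^1-c_{\sigma(l)}^2\|_E\le M\le (4M/\delta)C\varepsilon$. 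In both regimes the permutation in Step 3 is the same $\sigma$ constructed in Step 2.

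The main obstacle is Step 1's product bound. The point is that separation allows at most one small factor in $\prod_m|a_l^1-a_m^2|$. This is what converts the degree-$(2N-1)$ product into a linear, Lipschitz-type estimate rather than a Hölder one.
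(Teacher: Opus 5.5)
Your proposal is correct and follows essentially the same route as the paper's proof. Both use the nearest-point bound to leave a single small factor in the product, injectivity of the nearest-point map when $\varepsilon$ is small, boundedness of $G$ and $M$ when $\varepsilon$ is large, and the normalized probes with their Lipschitz bound for the coefficients. The only difference is cosmetic: the paper keeps the factor $d_l$ itself, since the other $N-1$ factors are $\geq\delta/2$ in both cases, and so obtains $d_l\leq C\varepsilon$ for every $\varepsilon$ without passing through $\min\{d_l,\delta/2\}$.
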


\begin{proof}
For each $l\in\{1,\cdots,N\}$, choose a nearest point $a_{m_l}^2$. Separation of the second configuration gives\vspace{-2mm}
\begin{equation}
\label{eq:matching-nearest-separation}
|a_l^1-a_m^2|\geq\delta/2\qquad(m\ne m_l).\vspace{-2mm}
\end{equation}
Indeed, this holds immediately if $|a_l^1-a_{m_l}^2|\geq\delta/2$; otherwise,
$|a_l^1-a_m^2|\geq\delta-|a_l^1-a_{m_l}^2|>\delta/2$.
For $|a_l^1-a_{m_l}^2|>0$, the product-probe estimate and the reference lower bound yield\vspace{-2mm}
\begin{equation*}
c_0r\delta^{N-1}(\delta/2)^{N-1}|a_l^1-a_{m_l}^2|
\leq\|c_l^1\|_E|H_l(a_l^1)|
=\|\mathfrak S[H_l]\|_E\leq C_0\varepsilon.\vspace{-2mm}
\end{equation*}
Thus $|a_l^1-a_{m_l}^2|\leq C\varepsilon$ for every $l$, including those with $|a_l^1-a_{m_l}^2|=0$.
Choose $\varepsilon_*>0$ so that $C\varepsilon_*<\delta/4$.
For $\varepsilon\leq\varepsilon_*$, the nearest points are unique and
form a permutation $\sigma$: two points in the first configuration
assigned to the same point would be less than $\delta/2$ apart.
This proves \eqref{eq:abstract-location-result} for small errors.
For $\varepsilon>\varepsilon_*$, any permutation gives the location
estimate by boundedness of $G$.

Under the additional assumptions for coefficient recovery, consider
first $\varepsilon\leq\varepsilon_*$. For the matching above, every
nonmatched node is at distance at least $3\delta/4$ from $a_l^1$, and
the normalized probes give\vspace{-2mm}
\begin{equation*}
\mathfrak S[V_l]=c_l^1-c_{\sigma(l)}^2V_l(a_{\sigma(l)}^2).\vspace{-2mm}
\end{equation*}
Their Lipschitz bounds and $V_l(a_l^1)=1$ imply\vspace{-2mm}
\begin{equation*}
\|c_l^1-c_{\sigma(l)}^2\|_E
\leq\|\mathfrak S[V_l]\|_E
+\|c_{\sigma(l)}^2\|_E
|V_l(a_{\sigma(l)}^2)-V_l(a_l^1)| \leq C\varepsilon+CM|a_l^1-a_{\sigma(l)}^2|
\leq C\varepsilon.\vspace{-2mm}
\end{equation*}
For $\varepsilon>\varepsilon_*$, the bound $M$ gives the coefficient
estimate for the same permutation after increasing the constant.
\end{proof}

\vspace{-2mm} 

\subsection{Annihilating probes in dimensions two and three}
\label{sec:annihilating-probes}

We now construct the probes required by \cref{lem:localization-matching}
in dimensions two and three at the common spectral parameter $p_\sharp$.
The $H^2(G)$ bounds below allow us to apply the boundary-to-source
estimates in \cref{lem:det-source-estimate,lem:sto-source-estimate}.
The dot product is extended complex bilinearly, whereas $|\cdot|$
denotes the Euclidean norm on $\mathbb C^n$.

\begin{lemma}[Polynomial annihilators in dimensions two and three]
\label{lem:isotropic-annihilator}
Let $n\in\{2,3\}$, let $a\in G$, and let
$b_1,\ldots,b_J\in G\setminus\{a\}$, where $J\leq2N-1$.  There exists
$H_{a,\mathbf b}\in H^2(G;\mathbb C)$ such that\vspace{-2mm}
\begin{align}
\label{eq:isotropic-annihilator-equation}
&(\mathcal L^*+p_\sharp)H_{a,\mathbf b} =0,\\
&H_{a,\mathbf b}(b_j) =0,
\qquad j=1,\ldots,J,\label{eq:isotropic-annihilator-zeros}\\
&|H_{a,\mathbf b}(a)|
\geq c\prod_{j=1}^J|a-b_j|,
\qquad
\|H_{a,\mathbf b}\|_{H^2(G)}\leq C.
\label{eq:isotropic-annihilator-bounds}
\end{align}
Here $c,C>0$ depend only on $G,A,N$.  If, in addition,
$|a-b_j|\geq\rho>0$ for every $j$, then the normalized probe
$V_{a,\mathbf b}=H_{a,\mathbf b}/H_{a,\mathbf b}(a)$ satisfies\vspace{-2mm}
\begin{equation}
\label{eq:normalized-isotropic-bounds}
V_{a,\mathbf b}(a)=1,
\qquad V_{a,\mathbf b}(b_j)=0,
\qquad
\|V_{a,\mathbf b}\|_{H^2(G)}
+\|V_{a,\mathbf b}\|_{\operatorname{Lip}(G)}
\leq C_\rho,\vspace{-2mm}
\end{equation}
where $C_\rho$ depends only on $G$, $A$, $N$  and $\rho$.
\end{lemma}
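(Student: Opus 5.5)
We construct the annihilator in gauge form: $H_{a,\mathbf b}(\xi)=\mathrm e^{-\frac12A\cdot(\xi-a)}h(\xi)$. By \eqref{eq:gauge-computation} with $p=p_\sharp$, the parameter $\kappa^2=p_\sharp+\mu+|A|^2/4$ vanishes, so $(\mathcal L^*+p_\sharp)H_{a,\mathbf b}=0$ reduces to $\Delta h=0$. Also $H(b_j)=0$ iff $h(b_j)=0$, and $H(a)=h(a)$. For $h$ we take a product of complex-isotropic linear forms. Fix $\zeta\in\mathbb C^n$ with $\zeta\cdot\zeta=0$ and $|\zeta|=\sqrt2$; for instance $\zeta=\varrho+\mathrm i f$ with $\varrho\perp f$ real unit vectors. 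Then every polynomial $g(\zeta\cdot\xi)$ of one complex variable is harmonic, because $\Delta g(\zeta\cdot\xi)=(\zeta\cdot\zeta)g''=0$. Set
\[
h(\xi)=\prod_{j=1}^J\zeta\cdot(\xi-b_j).
\]
This function is harmonic and vanishes at every $b_j$. The $H^2(G)$ bound then follows directly: $h$ is a polynomial of degree $J\le 2N-1$, and its coefficients are bounded by powers of $\operatorname{diam}G$ and of $|\zeta|$, so $\|H\|_{H^2(G)}\le C(G,A,N)$.

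The main obstacle is the lower bound $|h(a)|\ge c\prod_j|a-b_j|$. A single fixed $\zeta$ will not do, because $\zeta\cdot(a-b_j)$ can vanish for real $a-b_j\neq0$. Writing $d_j=(a-b_j)/|a-b_j|$, we have $|\zeta\cdot d_j|^2=(\varrho\cdot d_j)^2+(f\cdot d_j)^2$, which is the squared length of the projection of $d_j$ onto the plane $\operatorname{span}\{\varrho,f\}$. In $n=2$ that plane is all of $\mathbb R^2$, so $|\zeta\cdot d_j|=1$ and we may take $c=1$. In $n=3$ the quantity equals $1-(e\cdot d_j)^2$, where $e=\varrho\times f$ is the unit normal. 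We therefore choose $e$ depending on $a$ and $\mathbf b$ so that it stays away from all $\pm d_j$. The sphere $S^2$ minus the $2J$ spherical caps of angular radius $\theta$ around $\pm d_j$ is nonempty once $2J\cdot 2\pi(1-\cos\theta)<4\pi$. We fix $\theta=\theta(N)$ with $4N(1-\cos\theta)<2$, which gives $1-(e\cdot d_j)^2\ge\sin^2\theta$ for every $j$. Then we complete $e$ to an orthonormal frame $(\varrho,f,e)$. This yields $|h(a)|\ge(\sin\theta)^J\prod_j|a-b_j|$ with $c$ depending only on $N$, and the $H^2$ bound is uniform in the choice of frame.

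For the normalized probe, assume $|a-b_j|\ge\rho$. Then $|H(a)|=|h(a)|\ge c\rho^J$, and $V_{a,\mathbf b}=H/H(a)$ inherits the equation, the zeros and $V(a)=1$. We get $\|V\|_{H^2}\le C\rho^{-J}$. For the Lipschitz norm we do not use Sobolev embedding (a bounded $C^2$ domain need not be convex). Instead we note that $V$ is an explicit smooth function on the bounded convex hull of $G$, namely an exponential times a polynomial with bounded coefficients. Its gradient is therefore bounded there by $C(G,A,N)$, so the mean value theorem along segments in the hull gives $\|V\|_{\operatorname{Lip}(G)}\le C_\rho$. If some $b_j$ coincide, the product form still works unchanged.
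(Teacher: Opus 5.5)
Your proposal is correct and follows essentially the same route as the paper. It uses the gauge reduction at $p_\sharp$ to the harmonic polynomial $\prod_j\zeta\cdot(\xi-b_j)$ in a complex-isotropic direction $\zeta$. In $n=3$ it picks the unit normal on $\mathbb S^2$ outside the $J\le 2N-1$ pairs of caps around $\pm d_j$, by comparing total cap area with the sphere's area, so that $|\zeta\cdot d_j|^2=1-(e\cdot d_j)^2$ is bounded below uniformly. The $H^2$ and Lipschitz bounds then come from the explicit formula on a convex set containing $\overline G$. The differences are only cosmetic: you parametrize the caps by an angular radius where the paper uses the threshold $\alpha_N=1-(2N)^{-1}$, and you use the convex hull of $G$ where the paper uses an enclosing ball.
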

\vspace{-2mm}
\begin{remark}
The planar construction below is the isotropic-vector formulation of
the holomorphic polynomial probes used in
\cite{HuangJinKianTriki2026}.  The point of \cref{lem:isotropic-annihilator} is its
quantitative extension to dimension three, obtained by choosing the
complex-isotropic direction according to the finite source
configuration.
\end{remark}

\begin{proof}[Proof of \cref{lem:isotropic-annihilator}]
\emph{Step 1: choosing a uniformly transverse isotropic direction.}
Set $\alpha_N=1-(2N)^{-1}$ and $c_N=(1-\alpha_N^2)^{1/2}\in(0,1)$.
We choose $\theta\in\mathbb C^n$ satisfying\vspace{-2mm}
\begin{equation*}
\theta\cdot\theta=0,\qquad |\theta|=\sqrt2,
\qquad |\theta\cdot(a-b_j)|\geq c_N|a-b_j|,
\quad j=1,\ldots,J.\vspace{-2mm}
\end{equation*}
The first condition makes polynomials in $\theta\cdot\xi$ harmonic.
The last gives a uniform lower bound at the target point.
For $n=2$, take $\theta=\varrho_1+\mathrm i\varrho_2$, where
$\varrho_1,\varrho_2$ are the standard coordinate vectors. Then
$|\theta\cdot(a-b_j)|=|a-b_j|$.

For $n=3$, set $u_j=(a-b_j)/|a-b_j|$ and let $\varsigma$ denote
normalized surface measure on $\mathbb S^2$. The spherical cap area formula gives\vspace{-2mm}
\begin{equation*}
\varsigma\{g\in\mathbb S^2:|g\cdot u_j|>\alpha_N\}
=1-\alpha_N=\frac1{2N}.\vspace{-2mm}
\end{equation*}
Since $J\leq2N-1$, the union of these exceptional sets has measure at
most $J/(2N)<1$. Thus there is $g\in\mathbb S^2$ with
$|g\cdot u_j|\leq\alpha_N$ for every $j$.
Take an orthonormal basis $\varrho,f$ of $g^\perp$ and set
$\theta=\varrho+\mathrm i f$. Then $\theta\cdot\theta=0$,
$|\theta|=\sqrt2$, and\vspace{-2mm}
\begin{equation*}
|\theta\cdot u_j|^2
=|\varrho\cdot u_j|^2+|f\cdot u_j|^2
=1-|g\cdot u_j|^2\geq1-\alpha_N^2=c_N^2.\vspace{-1mm}
\end{equation*}
For $J=0$, choose any direction satisfying the first two conditions;
empty products are interpreted as $1$.

\smallskip\noindent
\emph{Step 2: constructing the annihilator.}
Define\vspace{-2mm}
\begin{equation*}
P_{a,\mathbf b}(s)=\prod_{j=1}^J(s-\theta\cdot b_j),\qquad s\in\mathbb C,\qquad
 H_{a,\mathbf b}(\xi)
=\mathrm e^{-\frac12A\cdot(\xi-a)}P_{a,\mathbf b}(\theta\cdot\xi).\vspace{-2mm}
\end{equation*}
The prescribed zeros follow from the factors in $P_{a,\mathbf b}$.
Since $\theta\cdot\theta=0$, we have
$
\Delta\bigl(P_{a,\mathbf b}(\theta\cdot\xi)\bigr)
=P_{a,\mathbf b}''(\theta\cdot\xi)\,\theta\cdot\theta=0$. 
The gauge identity \eqref{eq:gauge-computation}, with $p_\sharp=-\mu-\frac{|A|^2}{4}$, therefore gives
\eqref{eq:isotropic-annihilator-equation}.

\smallskip\noindent
\emph{Step 3: uniform bounds and normalization.}
Since $0<c_N<1$ and $J\leq2N-1$, the choice of $\theta$ gives\vspace{-2mm}
\begin{equation*}
|H_{a,\mathbf b}(a)|
=\prod_{j=1}^J|\theta\cdot(a-b_j)|
\geq c_N^{2N-1}\prod_{j=1}^J|a-b_j|.\vspace{-2mm}
\end{equation*}
Fix a ball $B$ containing $\overline G$, chosen in terms of $G$ alone.
Since $J\leq2N-1$, $|\theta|=\sqrt2$, and $a,b_j\in G$, differentiating
the explicit formula gives\vspace{-3mm}
\begin{equation*}
\|H_{a,\mathbf b}\|_{W^{2,\infty}(B)}\leq C,\vspace{-2mm}
\end{equation*}
where $C$ depends only on $G,A,N$. Restriction to $G$ yields the
$H^2(G)$ bound in \eqref{eq:isotropic-annihilator-bounds}.

If $|a-b_j|\geq\rho>0$ for every $j$, the product lower bound gives
$|H_{a,\mathbf b}(a)|\geq c_{\rho,N}>0$ uniformly for $0\leq J\leq2N-1$.
Consequently,\vspace{-2mm}
\begin{equation*}
\|V_{a,\mathbf b}\|_{W^{2,\infty}(B)}\leq C_\rho.\vspace{-1mm}
\end{equation*}
Restriction to $G$ gives the $H^2(G)$ bound, while the mean-value formula
on the convex ball $B$ gives the Lipschitz bound in
\eqref{eq:normalized-isotropic-bounds}.
\end{proof}

\vspace{-3mm}

\subsection{Spectral synthesis for one-dimensional diffusion sources}

In one dimension, since no nonzero complex‑isotropic direction exists, the construction from \cref{lem:isotropic-annihilator} cannot be used. For diffusion sources, we therefore derive the probe estimates needed in \cref{lem:localization-matching} by combining finitely many spectral parameters.

Assume in this subsection that $G=(0,\ell)$ and $N=N_1$. Let $z$ be the
centered solution in \eqref{eq:center-equation}, and set
$\varepsilon=\|z\|_{\operatorname{obs}}$.
Set\vspace{-2mm}
\begin{equation}
\label{eq:one-dimensional-spectral-family}
p_j=j^2+Aj-\mu,\qquad \phi_j(\xi)=\mathrm e^{j\xi},
\qquad j=0,\ldots,2N-1.\vspace{-1mm}
\end{equation}
Then each $p_j$ is real and $(\mathcal L^*+p_j)\phi_j=0$.
Writing $q(\xi)=\mathrm e^\xi$, we have $\phi_j=q^j$.

By \cref{lem:sto-source-estimate},\vspace{-1mm}
\begin{equation}
\label{eq:one-dimensional-moment-control}
\|\mathfrak D[\phi_j]\|_{L^2(0,T)}\leq C\varepsilon,
\qquad j=0,\ldots,2N-1,\vspace{-1mm}
\end{equation}
where $C$ may additionally depend on $N$.

\begin{lemma}[One-dimensional annihilators by spectral synthesis]
\label{lem:one-dimensional-synthesis}
Let $a\in G$ and $b_1,\ldots,b_J\in G\setminus\{a\}$, where
$J\leq2N-1$, and define\vspace{-2mm}
\begin{equation}
\label{eq:one-dimensional-annihilator}
H_{a,\mathbf b}(\xi)
=\prod_{j=1}^J\bigl(q(\xi)-q(b_j)\bigr).\vspace{-2mm}
\end{equation}
Then $H_{a,\mathbf b}$ is a linear combination of
$\phi_0,\ldots,\phi_J$ with uniformly bounded coefficients, and\vspace{-2mm}
\begin{align}
\label{eq:one-dimensional-annihilator-control}
&\|\mathfrak D[H_{a,\mathbf b}]\|_{L^2(0,T)}\leq C\varepsilon,\\
\label{eq:one-dimensional-annihilator-lower}
&|H_{a,\mathbf b}(a)| \geq c\prod_{j=1}^J|a-b_j|.
\end{align}
If $|a-b_j|\geq\rho>0$ for all $j$, the normalized function
$V_{a,\mathbf b}=H_{a,\mathbf b}/H_{a,\mathbf b}(a)$ is a linear
combination of $\phi_0,\ldots,\phi_J$ with uniformly bounded coefficients
and satisfies\vspace{-2mm}
\begin{equation}
\label{eq:one-dimensional-normalized-bounds}
\|\mathfrak D[V_{a,\mathbf b}]\|_{L^2(0,T)}\leq C_\rho\varepsilon,
\qquad
\|V_{a,\mathbf b}\|_{\operatorname{Lip}(G)}\leq C_\rho.\vspace{-2mm}
\end{equation}
The constants may additionally depend on $N$;
the constants in the normalized estimates also depend on $\rho$.
\end{lemma}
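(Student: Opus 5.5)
Expanding the product in $q$ gives the linear-combination claim: $H_{a,\mathbf b}(\xi)=\prod_{j=1}^J(q(\xi)-q(b_j))=\sum_{m=0}^J e_m\,q(\xi)^m=\sum_{m=0}^J e_m\phi_m(\xi)$. Each coefficient $e_m$ is, up to sign, an elementary symmetric polynomial in $q(b_1),\dots,q(b_J)$. Since $b_j\in G=(0,\ell)$, we have $1\le q(b_j)\le \mathrm e^\ell$, so $|e_m|\le \binom{J}{m}\mathrm e^{\ell J}\le 2^{2N}\mathrm e^{2N\ell}$, a bound depending only on $G$ and $N$.

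The functional $\mathfrak D$ is linear in the test function, because it is built from point evaluations with fixed temporal coefficients. Hence $\mathfrak D[H_{a,\mathbf b}]=\sum_m e_m\mathfrak D[\phi_m]$. The triangle inequality in $L^2(0,T)$ together with \eqref{eq:one-dimensional-moment-control} then gives \eqref{eq:one-dimensional-annihilator-control}, with $C$ depending on $N$. The dependence on the finitely many real parameters $p_j$ through $C_{p_j}$ in \cref{lem:sto-source-estimate} is harmless, since the $p_j$ are fixed by $A,\mu,N$. The zeros $H_{a,\mathbf b}(b_j)=0$ are immediate from the product.

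For the lower bound \eqref{eq:one-dimensional-annihilator-lower}, I use the mean-value theorem: $|q(a)-q(b_j)|=\mathrm e^{\zeta_j}|a-b_j|\ge |a-b_j|$, because $\zeta_j\in(0,\ell)$. Multiplying over $j$ gives the bound with $c=1$. This step is where one dimension helps: $q$ is strictly monotone, so it separates points with a uniform bi-Lipschitz constant. In higher dimensions this separation is not available, which is why the paper needed isotropic directions there.

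For the normalized probe, suppose $|a-b_j|\ge\rho$ for every $j$. Then $|H_{a,\mathbf b}(a)|\ge\rho^J\ge\min\{1,\rho\}^{2N-1}$, so dividing by $H_{a,\mathbf b}(a)$ multiplies the coefficients $e_m$ by at most $\max\{1,\rho^{-1}\}^{2N-1}$. This gives uniformly bounded coefficients for $V_{a,\mathbf b}$ and, by the same linearity argument as above, the estimate $\|\mathfrak D[V_{a,\mathbf b}]\|_{L^2(0,T)}\le C_\rho\varepsilon$.

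For the Lipschitz bound, on $[0,\ell]$ each $\phi_m$ satisfies $\|\phi_m\|_{L^\infty}\le\mathrm e^{m\ell}$ and $\|\phi_m'\|_{L^\infty}\le m\mathrm e^{m\ell}$. A finite combination with bounded coefficients therefore has $\|V_{a,\mathbf b}\|_{\operatorname{Lip}(G)}\le C_\rho$, using the mean-value theorem on the interval $G$.

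I do not expect a real obstacle here. The only points to check are that every constant is uniform in the configuration (it depends only on $\ell,N,\rho$ and the fixed $p_j$), and that the count $J\le 2N-1$ stays within the available family $\phi_0,\dots,\phi_{2N-1}$.
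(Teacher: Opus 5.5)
Your proposal is correct and follows the paper's proof essentially step by step. You expand the product into $\sum_m e_m\phi_m$ with coefficients bounded through elementary symmetric polynomials of the roots $q(b_j)\in[1,\mathrm e^\ell]$, use linearity of $\mathfrak D$ with \eqref{eq:one-dimensional-moment-control}, get the lower bound from the bi-Lipschitz (mean-value) property of $q$, and normalize using $|H_{a,\mathbf b}(a)|\ge\rho^J$; your explicit constants (e.g.\ $c=1$) are a harmless sharpening of the paper's unspecified ones.
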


\begin{proof}
We combine the $L^2(0,T)$ estimates \eqref{eq:one-dimensional-moment-control}
after testing at each individual $p_j$; the synthesized function need not
solve one common adjoint equation. Expanding the polynomial in
\eqref{eq:one-dimensional-annihilator} gives\vspace{-2mm}
\begin{equation*}
H_{a,\mathbf b}(\xi)=\sum_{j=0}^J c_j\phi_j(\xi).\vspace{-2mm}
\end{equation*}
The roots $q(b_j)$ remain in a fixed compact interval, so the elementary
symmetric-polynomial formula gives $\sum_{j=0}^J|c_j|\leq C$.  Linearity and
\eqref{eq:one-dimensional-moment-control} prove
\eqref{eq:one-dimensional-annihilator-control}.

The map $q$ is bi-Lipschitz on $[0,\ell]$: there exist $0<c_q\leq C_q$ such
that\vspace{-2mm}
\begin{equation*}
c_q|x-y|\leq|q(x)-q(y)|\leq C_q|x-y|,
\qquad x,y\in[0,\ell].\vspace{-2mm}
\end{equation*}
Consequently,\vspace{-3mm}
\begin{equation*}
|H_{a,\mathbf b}(a)|
=\prod_{j=1}^J|q(a)-q(b_j)|
\geq c\prod_{j=1}^J|a-b_j|,\vspace{-2mm}
\end{equation*}
which proves \eqref{eq:one-dimensional-annihilator-lower}.  If all
$|a-b_j|\geq\rho$, the denominator $H_{a,\mathbf b}(a)$ is bounded away
from zero.  The normalized polynomial coefficients and the first derivative
of the explicit formula are then uniformly bounded, proving
\eqref{eq:one-dimensional-normalized-bounds}.\vspace{-2mm}
\end{proof}

\begin{remark}
This synthesis uses the fact that $\mathfrak D[\phi_j]$ is estimated in
the same space $L^2(0,T)$ for every $p_j$, with coefficients
$\gamma_l^j$ independent of $p_j$. For drift sources, changing the
spectral parameter changes the scalar moments
$\widetilde{\lambda_k^j}(p_j)$, so these estimates do not combine into
the same annihilation identity.\vspace{-2mm}
\end{remark}

\subsection{Proof of the finite-source theorem}

\begin{proof}[Proof of \cref{thm:main-multi}]
Let $\bar v$ and $z$ solve \eqref{eq:mean-equation} and
\eqref{eq:center-equation}, respectively.

\smallskip\noindent
\textbf{Drift sources ($n=2,3$).}
Set $N=N_0$, $\varepsilon=\|\bar v\|_{\operatorname{obs}}$, $E=\mathbb C$,
$a_k^j=x_k^j$, and 
$c_k^j=\widetilde{\lambda_k^j}(p_\sharp)
=\int_0^T\mathrm e^{-p_\sharp t}\lambda_k^j(t)\,dt$. 
Using the node lists in \cref{sec:matching-principle},
\cref{lem:det-source-estimate,lem:isotropic-annihilator} at $p=p_\sharp$
give the product-probe bound in \eqref{eq:abstract-product-probe}.
Assumption~\ref{ass:multi-drift} supplies separation and the coefficient
lower bound. Thus \cref{lem:localization-matching} gives the drift location
estimate, without an upper bound on the strengths.
Under \eqref{eq:multi-drift-upper},\vspace{-3mm}
\begin{equation*}
\sum_{k=1}^{N_0}\bigl(|c_k^1|+|c_k^2|\bigr)
\leq\|\mathrm e^{-p_\sharp\cdot}\|_{L^2(0,T)}M.\vspace{-2mm}
\end{equation*}
The same probe estimates with $\rho=3\delta/4$ give
\eqref{eq:abstract-normalized-probe}, so the coefficient conclusion of
\cref{lem:localization-matching} yields the weighted-moment estimate
for the same permutation.

\smallskip\noindent
\textbf{Diffusion sources.}
Set $N=N_1$, $\varepsilon=\|z\|_{\operatorname{obs}}$,
$E=L^2(0,T;\mathbb C)$, $a_l^j=y_l^j$, and $c_l^j=\gamma_l^j$,
viewing the real strengths in their complexification.
For $n=2,3$, \cref{lem:isotropic-annihilator,lem:sto-source-estimate}
at $p=p_\sharp$ give the product and normalized probe bounds.
For $n=1$, \cref{lem:one-dimensional-synthesis} gives the same bounds.
In both cases, use the prescribed node lists and $\rho=3\delta/4$ for
the normalized probes. Assumption~\ref{ass:multi} supplies separation
and the coefficient bounds, so \cref{lem:localization-matching} gives
the diffusion location and strength estimates with the same permutation.
Finally, \eqref{eq:channel-contraction} bounds both channel errors by
$\|u^1-u^2\|_{\operatorname{obs}}$ and proves all three conclusions.
\end{proof}

\begin{remark}
\label{rem:deterministic-matching}
In the planar deterministic setting of \cite{HuangJinKianTriki2026},
with $\mu=-|A|^2/4$, the moments at $p_\sharp=0$ are unweighted.
The nearest-point separation bound \eqref{eq:matching-nearest-separation}
in the proof of \cref{lem:localization-matching} strengthens the intermediate
product estimate
in their Section~4.2, (4.11), directly to Lipschitz localization and
constructs the matching permutation. This improvement uses the separation
assumption already present in that argument.
\end{remark}
\vspace{-3mm}

\section{Numerical experiments}
\label{sec:numerics}

We present three experiments illustrating the stability recovery mechanisms: simultaneous reconstruction of one source per channel, temporal conditioning at known locations, and recovery of two diffusion sources as their separation decreases.

The procedure combines channel separation, dictionary-based localization, and regularized least-squares recovery of temporal strengths. It is independent of the adjoint probe constructions used in the proofs. The experiments focus on recoverability and conditioning, not on estimating stability exponents.

\vspace{-2mm}

\subsection{Discretization and reconstruction}

Experiments use the unit disk \(G\subset\mathbb R^2\), \(u_0=0\), with \(T=1\), \(T_0=0.55\), \(T_1=0.75\), \(A=(0.35,-0.20)\), \(\mu=0.5\), and observation arc \(\Gamma=\{(\cos\theta,\sin\theta):-\pi/3<\theta<\pi/2\}\).

Let $\mathcal T_h$ be a triangular mesh with spatial mesh size
$h=\max_{E\in\mathcal T_h}\operatorname{diam}(E)$. We use continuous
$P_1$ finite elements on $\mathcal T_h$ with nodal basis $\{\varphi_i\}$
and implicit Euler time stepping. Let $M_h$ and $K_h$ denote the mass
matrix and the finite element matrix associated with
$\mathcal L=-\Delta+A\cdot\nabla+\mu$, respectively, and set
$b_h(\xi)_i=\varphi_i(\xi)$. With $U^n$ at $t_n=n\Delta t$,
$\Delta t=T/N_t$, and $\Delta W_n=W(t_{n+1})-W(t_n)$, the scheme for one
source per channel is\vspace{-2mm}
\[
(M_h+\Delta t K_h)U^{n+1}=M_hU^n+\Delta t\,\lambda(t_{n+1})b_h(x)+\gamma(t_n)b_h(y)\Delta W_n.\vspace{-2mm}
\]
Location data use a \(29{,}958\)-triangle mesh with 480 time steps; inversion uses \(8{,}068\) triangles and 160 observation steps. Sources are off nodes.

The discrete norm approximates \(\|\cdot\|_{\operatorname{obs}}\). For boundary residuals $r_n$, let $M_\partial,M_\Gamma$ be the mass matrices on $\partial G,\Gamma$, respectively, and $S_\Gamma$ the tangential stiffness matrix on $\Gamma$; with $D_t r_n=(r_n-r_{n-1})/\Delta t$ for $n\geq1$, we set\vspace{-3mm}
\begin{equation}\label{eq:num-observation-norm}
\|r\|_h^2
:= \Delta t\sum_{n=0}^{N_t}r_n^\top M_{\partial}r_n
+\Delta t\sum_{t_n\in[T_1,T]}
\bigl[r_n^\top M_\Gamma r_n
+(D_t r_n)^\top M_\Gamma D_t r_n
+r_n^\top S_\Gamma r_n\bigr].
\vspace{-2mm}
\end{equation}
For $N_{\mathrm{MC}}$ paths, the empirical norm is\vspace{-4mm}
\begin{equation}
\label{eq:num-empirical-norm}
\|r\|_{h,N_{\mathrm{MC}}}^2
=\frac1{N_{\mathrm{MC}}}\sum_{m=1}^{N_{\mathrm{MC}}}
\|r^{(m)}\|_h^2.\vspace{-2mm}
\end{equation}
The same weights are used for calibration, reconstruction, and conditioning. Noise is smoothed and scaled to relative level \(\eta\).

Each realization is paired with its Brownian path. Expectation/centering separates channels: reconstruct diffusion from centered observations, subtract its mean response, then recover drift from the corrected mean.

For fixed locations, response is linear in strengths. Each strength uses 18 cubic B-splines on \([0,T_0]\), extended by zero, with Tikhonov regularization. For candidates \(\theta\), coefficients \(a\), response matrix \(B(\theta)\), data \(\mathbf d\), and weight matrix \(W_{\rm obs}\), set \(\|r\|_{W_{\rm obs}}^2=r^*W_{\rm obs}r\). The reconstruction is\vspace{-2mm}
\begin{equation}
\label{eq:num-profile-fit}
a_\alpha(\theta)=\operatorname*{argmin}_a
\bigl\{\|B(\theta)a-\mathbf d\|_{W_{\rm obs}}^2
+\alpha\|Pa\|_2^2+\beta_{\rm stab}(\theta)\|a\|_2^2\bigr\},\vspace{-2mm}
\end{equation}
with \(P\) the second-difference matrix and \(\beta_{\rm stab}\) a diagonal stabilizer. Diffusion data and responses are centered. Regularization is chosen by residual matching.

Localization projects observations onto precomputed candidate responses: diffusion uses centered data correlated with Brownian increments and low-boundary Fourier modes; drift uses the corrected mean projected onto spline responses. Candidates with smallest residuals are refined locally; final drift location minimizes the full residual.

Single-source diffusion uses all 480 increments; localization and two-source recovery use pathwise-aggregated increments on coarser grids.

Each configuration uses 30 independent runs, separate from tuning. Location errors are normalized by $\operatorname{diam}(G)=2$, and profile errors are reported as relative $L^2(0,T_0)$ errors. Median errors are shown, with variability indicated in the figures and tables.

\vspace{-2mm}

\subsection{Simultaneous single-source recovery}

In the first experiment, \(N_0=N_1=1\) (indices suppressed), with drift and diffusion sources at \(x^\star=(-0.38,0.18)\) and \(y^\star=(0.32,-0.24)\). Set \(\rho(t)=\sin^2(\pi t/T_0)\) on \([0,T_0]\) and zero on \((T_0,T]\). Before \(L^2(0,T_0)\) normalization,\vspace{-2mm}
\[
\lambda_{\rm raw}(t)=\rho(t)\bigl[1+0.25\cos(4\pi t/T_0)-0.15\sin(6\pi t/T_0)\bigr],\quad
\gamma_{\rm raw}(t)=\rho(t)\bigl[0.80\sin(4\pi t/T_0)-0.45\sin(10\pi t/T_0)\bigr].\vspace{-2mm}
\]
Both normalized profiles have unit \(L^2(0,T_0)\) norm and are outside the spline space. With \(N_{\mathrm{MC}}=512\) paths and \(\eta\in\{0.25\%,0.5\%,1\%,2\%\}\), \Cref{fig:num-single} shows recovered locations and profiles and their error dependence on noise; the representative run is the one closest to the sample median at \(1\%\) noise.

\begin{figure}[!htb]
\centering
\includegraphics[width=.8\textwidth]{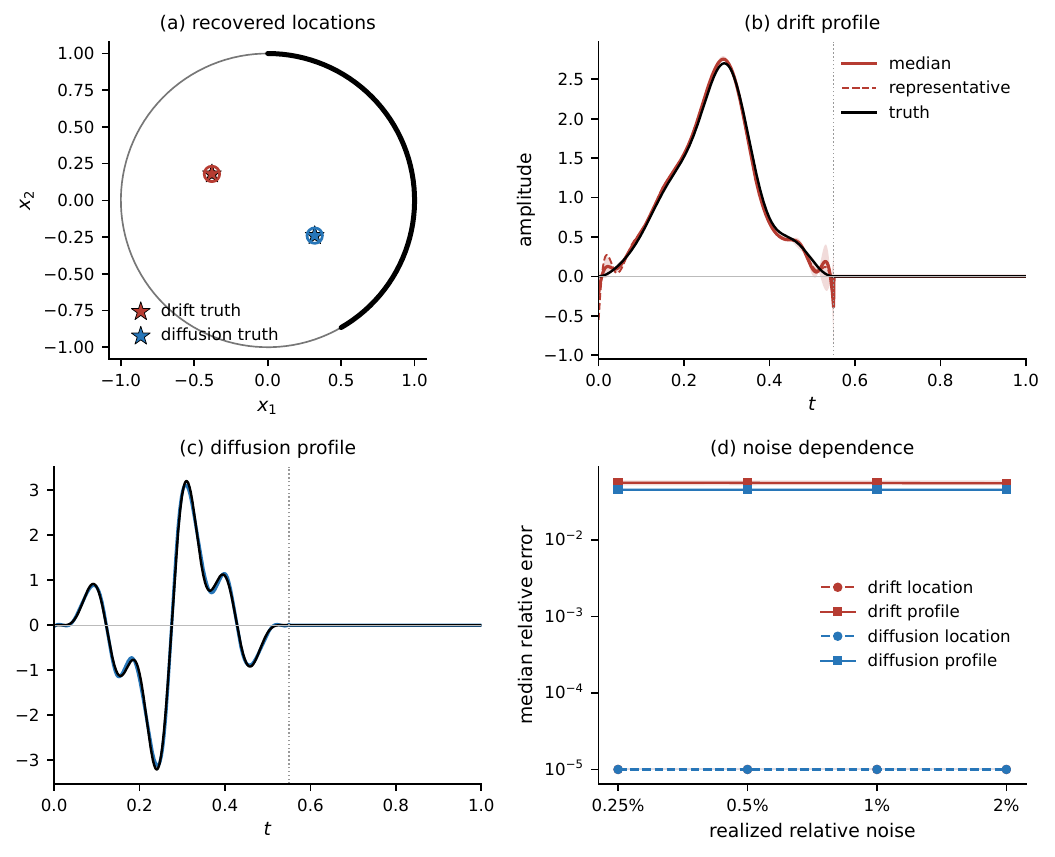}
\caption{Simultaneous single-source recovery. Panel (a) shows the true
and reconstructed locations at $1\%$ noise. Panels (b)--(c) compare
the true profiles with the pointwise medians, empirical
$10\%$--$90\%$ bands, and a representative run (dashed).
Panel (d) shows median errors against noise level, with $95\%$
bootstrap intervals and a plotting floor of $10^{-5}$.}
\label{fig:num-single}\vspace{-3mm}
\end{figure}

At \(1\%\) noise, both normalized location errors are below \(0.05\) across all \(30\) runs. Median relative profile errors are \(0.055\) for the drift and \(0.045\) for the diffusion (see \cref{tab:num-single}). The vanishing median location errors indicate that the true points are included in the refined search dictionary. The reconstructed profiles capture the drift variation and the sign changes of the diffusion strength, confirming that the two channels can be separated and recovered from the same noisy observations.

\begin{table}[htbp]
\centering
\small
\begin{tabular}{lrrrr}
\hline
Channel & \multicolumn{2}{c}{Location error} & \multicolumn{2}{c}{Profile error} \\
& Median & \shortstack{90th\\percentile} & Median & \shortstack{90th\\percentile} \\
\hline
drift & $<10^{-12}$ & $2.5\times10^{-3}$ & $0.055$ & $0.076$ \\
diffusion & $<10^{-12}$ & $<10^{-12}$ & $0.045$ & $0.046$ \\
\hline
\end{tabular}
\caption{Single-source errors over $30$ runs at
$N_{\mathrm{MC}}=512$ and $1\%$ relative noise. Location errors are
normalized by the domain diameter, and profile errors are relative
$L^2(0,T_0)$ errors; values below $10^{-12}$ are reported at that threshold.}
\label{tab:num-single}
\end{table}

The reconstruction is qualitatively unchanged on the three inversion
meshes in \cref{tab:num-mesh}. These have $3{,}017$, $8{,}068$, and
$10{,}023$ triangles, with $120$, $160$, and $240$ observation time
steps, respectively. Over ten runs at $0.25\%$ noise, the median location
errors remain below $10^{-12}$. The drift-profile error decreases under
refinement, while the diffusion-profile error remains near $0.045$.

\begin{table}[htbp]
\centering
\small
\begin{tabular}{lrrr}
\hline
Mesh & Drift profile & Diffusion profile & Forward discrepancy \\
\hline
coarse & $0.09198$ & $0.04530$ & $1.4063\times10^{-3}$ \\
standard & $0.05706$ & $0.04531$ & $6.6425\times10^{-4}$ \\
fine & $0.05302$ & $0.04538$ & $7.2141\times10^{-4}$ \\
\hline
\end{tabular}
\caption{Median relative profile errors over ten runs at $0.25\%$
noise. The forward discrepancy compares data and inversion
discretizations at the true sources using the noiseless $512$-path
ensemble of the first seed.}
\label{tab:num-mesh}\vspace{-3mm}
\end{table}

\vspace{-2mm}

\subsection{Temporal information in the two channels}

Now fix locations at the true points and compare temporal oscillation transmission in the two forward maps, both using the standard mesh and \(160\) time steps. For \(k=1,\dots,24\), define \(q_k(t)=c_k\rho(t)\sin(k\pi t/T_0)\) with \(\|q_k\|_{L^2(0,T_0)}=1\). Generalized singular values measure conditioning relative to the input norm. With \(B_0\) the drift response, \(B_1^{(m)}\) the diffusion response for path \(m\), and \((G_t)_{ij}=(q_i,q_j)_{L^2(0,T_0)}\) (trapezoidal quadrature), the response Gram matrices are\vspace{-4mm}
\[
H_0=B_0^*W_{\rm obs}B_0,\qquad
H_1=\frac1{2048}\sum_{m=1}^{2048}(B_1^{(m)})^*W_{\rm obs}B_1^{(m)}.\vspace{-2mm}
\]
Generalized singular values \(\sigma\) satisfy \(H_jv=\sigma^2G_tv\); their largest-to-smallest ratio gives the condition number on the span of the first \(K\) modes. Individual mode gains \(\sqrt{(H_j)_{kk}}\) are normalized by the gain of \(q_1\).

To connect to reconstruction, use the same oscillatory profile in both channels:\vspace{-2mm}
\[
q_H(t)=c_H\rho(t)[\sin(4\pi t/T_0)+0.6\sin(20\pi t/T_0)],\vspace{-2mm}
\]
normalized to unit \(L^2(0,T_0)\) norm. Reconstruction uses the span of \(q_1,\dots,q_{24}\) (containing \(q_H\)) with an identity penalty in \eqref{eq:num-profile-fit}. Drift data are a single deterministic trace; diffusion data use \(128\) fixed paths. For each channel, we take \(100\) independent noise realizations at \(0.1\%,0.3\%,1\%,3\%\), calibrated in that channel's norm.

\begin{figure}[htbp]
\centering
\includegraphics[width=.90\textwidth]{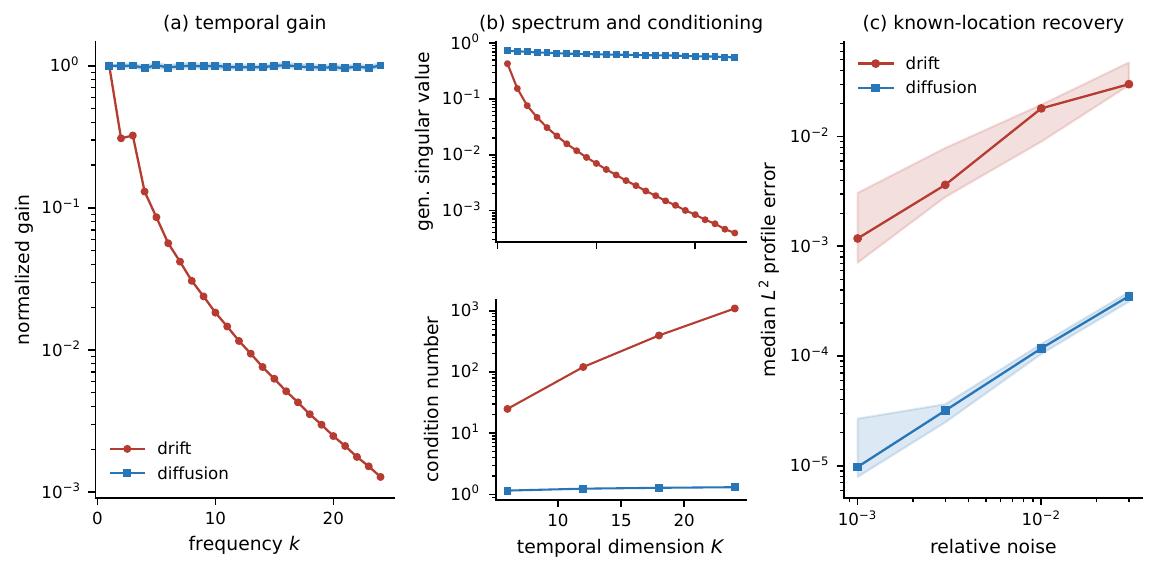}
\caption{Temporal information in the two channels: (a) gains normalized by first-mode gain, with \(95\%\) bootstrap intervals for diffusion; (b) \(K=24\) singular values (nonincreasing) and condition numbers; (c) median relative \(L^2\) errors and interquartile bands over \(100\) noise realizations for the common high-frequency profile at known locations.}
\label{fig:num-mechanism}\vspace{-3mm}
\end{figure}

\Cref{fig:num-mechanism} shows a pronounced loss of sensitivity to
temporal oscillations in the drift channel. At $k=20$, its normalized
gain is $0.00248$, compared with $0.975$ for the diffusion channel.
At $K=24$, the drift condition number is $833$ times the diffusion
condition number. This difference is also visible in reconstruction:
at $1\%$ noise, the median profile errors are $0.0181$ and
$1.18\times10^{-4}$, respectively. Temporal oscillations can cancel in
the deterministic time integral, while It\^o's isometry retains the
energy of the diffusion integrand. The three comparisons illustrate this
distinction underlying the strength estimates in \cref{thm:main-single}.

\vspace{-2mm}

\subsection{Two-source recovery and separation}

The final experiment examines separation effects on localization and profile reconstruction. We take \(N_0=0\), \(N_1=2\), \(y_{\rm c}=(0.08,-0.08)\), \(\varrho=(\cos35^\circ,\sin35^\circ)\), \(y_1^\star=y_{\rm c}-d\varrho/2\), \(y_2^\star=y_{\rm c}+d\varrho/2\), and \(d\in\{0.50,0.35,0.25,0.18,0.12\}\).

Define the unnormalized profiles by\vspace{-2mm}
\[
\gamma_{1,\rm raw}(t)=\rho(t)[0.85\sin(4\pi t/T_0)+0.30\cos(8\pi t/T_0)],\quad\;
\gamma_{2,\rm raw}(t)=\rho(t)[0.75\cos(6\pi t/T_0)-0.40\sin(12\pi t/T_0)].\vspace{-2mm}
\]
We normalize these profiles to have $L^2(0,T_0)$ norms $1$ and $0.8$, respectively. Each separation uses \(N_{\mathrm{MC}}=512\), \(1\%\) noise, and \(30\) runs.

Joint localization fixes the count at two, searches dictionary pairs with separation \(\ge0.08\), refines locally, and reconstructs profiles. Dimensionless separation \(d/h_{\rm inv}\), with \(h_{\rm inv}=\sqrt{2\pi/8000}\simeq0.028025\), corresponds to the inversion mesh target. 
Evaluation matches locations via the Hungarian algorithm, using the same permutation \(\sigma\) for signed profile errors; success is \(\max_{l=1,2}|\widehat y_{\sigma(l)}-y_l^\star|<d/4\).

The condition number uses generalized singular values of the two-source response and block temporal Gram matrices at true locations. \Cref{fig:num-multi} shows matched reconstructions and their separation dependence.

\begin{figure}[!htb]
\centering
\includegraphics[width=.76\textwidth]{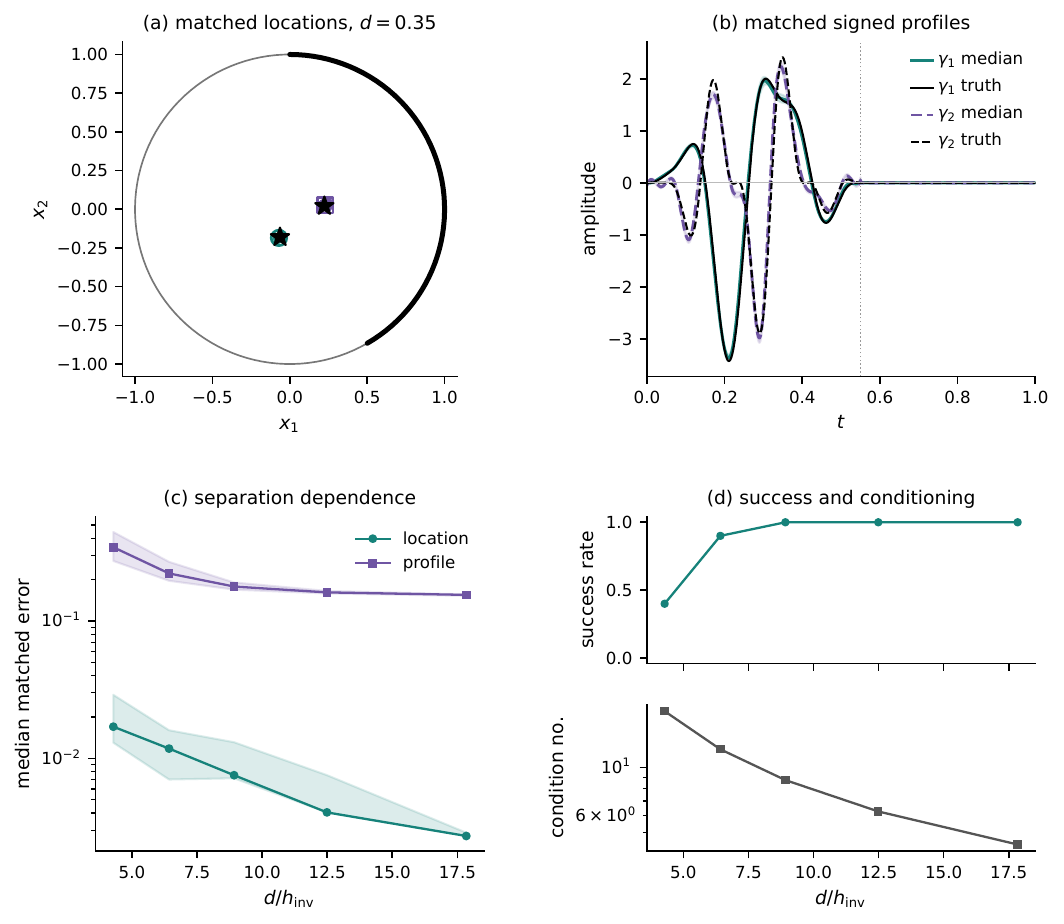}
\caption{Two-source diffusion recovery: (a)--(b) matched locations and signed profiles at \(d=0.35\) with \(10\%\)–\(90\%\) bands; (c) median diameter-normalized location and relative profile errors vs. \(d/h_{\rm inv}\), with interquartile bands; (d) matching success rate (threshold \(d/4\)) and condition number.
}
\label{fig:num-multi}\vspace{-2mm}
\end{figure}

\Cref{tab:num-multi} shows successful matching in all $90$ runs with
$d\geq0.25$. As the separation decreases, the median location and profile
errors increase, together with the condition number. The success rate
decreases to $90\%$ at $d=0.18$ and to $40\%$ at $d=0.12$,
where the median relative profile error is $0.344$. Nearby sources
produce less distinguishable boundary responses; the resulting deterioration
illustrates the role of the separation assumption in \cref{thm:main-multi}.

\begin{table}[!htb]
\centering
\small
\renewcommand{\arraystretch}{1.1}
\begin{tabular}{rrrrrrr}
\toprule
\multicolumn{1}{c}{\multirow{2}{*}{$d/h_{\rm inv}$}}
& \multicolumn{1}{c}{\multirow{2}{*}{Success}}
& \multicolumn{2}{c}{Location error}
& \multicolumn{2}{c}{Profile error}
& \multicolumn{1}{c}{\multirow{2}{*}{\shortstack{Condition\\number}}} \\
\cmidrule(lr){3-4}\cmidrule(lr){5-6}
& & \multicolumn{1}{c}{Median} & \multicolumn{1}{c}{90th percentile}
& \multicolumn{1}{c}{Median} & \multicolumn{1}{c}{90th percentile} & \\
\midrule
$17.8$ & $100\%$ & $0.003$ & $0.006$ & $0.155$ & $0.157$ & $4.4$ \\
$12.5$ & $100\%$ & $0.004$ & $0.010$ & $0.161$ & $0.169$ & $6.3$ \\
$8.9$ & $100\%$ & $0.008$ & $0.017$ & $0.178$ & $0.221$ & $8.7$ \\
$6.4$ & $90\%$ & $0.012$ & $0.022$ & $0.222$ & $0.308$ & $12.1$ \\
$4.3$ & $40\%$ & $0.017$ & $0.040$ & $0.344$ & $0.475$ & $18.2$ \\
\bottomrule
\end{tabular}
\caption{Two-source results: 30 runs per separation, \(N_{\mathrm{MC}}=512\), \(1\%\) noise. Errors are maxima over matched sources; locations normalized by domain diameter, profiles relative \(L^2(0,T_0)\). Success threshold \(d/4\); condition number at true locations reported as median.
}
\label{tab:num-multi}
\end{table}

\FloatBarrier
\appendix

\vspace{-3mm}

\section{The direct problem and boundary regularity}
\label{app:direct}

\vspace{-2mm}

We prove \cref{prop:direct-wellposed} using standard semigroup estimates,
with particular attention to the full-time boundary trace: the spatial
separation of the sources and the boundary makes the localized stochastic
convolution square integrable. Smoothing after the sources become
inactive gives the late-time regularity. We then justify the point-source
approximation used in \cref{lem:sto-source-estimate}. Constants may depend
on $G,A,\mu,T,N_0,N_1$; other dependencies are stated explicitly.

\begin{proof}[Proof of \cref{prop:direct-wellposed}]
The adjoint of the Neumann realization \eqref{eq:A-def} is\vspace{-2mm}
\begin{equation*}
\mathcal A^*v=\Delta v+A\cdot\nabla v-\mu v,
\qquad D(\mathcal A^*)=
\{v\in H^2(G):\partial_\nu v+(A\cdot\nu)v=0\}.\vspace{-2mm}
\end{equation*}
Elliptic regularity \cite[Theorem~2.3.3.2]{Grisvard2011}, after absorbing lower-order terms, gives equivalence between the graph norm of \(D(\mathcal A^*)\) and the \(H^2(G)\) norm. Since \(n\leq3\), Sobolev embedding \(H^2(G)\hookrightarrow C(\overline G)\) yields\vspace{-2mm}
\begin{equation}\label{eq:delta-uniform}
\sup_{y\in G}\|\delta_y\|_{\mathcal X_{-2}}\leq C.
\vspace{-2mm}
\end{equation}

Standard semigroup and extrapolation theory \cite[Chapter~3]{Pazy1983}, \cite[Section~II.5]{EngelNagel2000} provide the analytic semigroups \(S(t)\) and \(S_{-2}(t)\). Multiplication by \(\mathrm e^{A\cdot x/2}\) makes \(\mathcal A\) similar to a self-adjoint Robin realization, so \(S_{-2}(t)\) satisfies \(\|S_{-2}(t)\|\leq\mathrm e^{\rho t}\) for some \(\rho\geq0\) in an equivalent Hilbert norm. Applying \cite[Theorems~3.18 and~3.20]{Lue2021a} on \(\mathcal X_{-2}\) with \(p=2\) and \eqref{eq:delta-uniform} yields the unique mild solution and \eqref{eq:wellposed-est}, where\vspace{-3mm}
\begin{equation}\label{eq:mild}
u(t)=S(t)u_0
+\sum_{k=1}^{N_0}\int_0^t S_{-2}(t-s)\lambda_k(s)\delta_{x_k}\,ds
+\sum_{l=1}^{N_1}\int_0^t S_{-2}(t-s)\gamma_l(s)\delta_{y_l}\,dW(s)
\quad\text{in }\mathcal X_{-2}.
\vspace{-2mm}
\end{equation}
Analytic smoothing on the extrapolation scale \cite[Theorem~2.6.13]{Pazy1983}, after a spectral shift, gives the estimates used below:\vspace{-2mm}
\begin{align}
&\|S(t)\|_{\mathcal L(L^2(G),H^2(G))} \leq Ct^{-1}, \qquad 0<t\leq2T, \notag\\
\label{eq:positive-lag-smoothing}
&\sup_{\tau\leq t\leq T}\sum_{j=0}^1
\|\partial_t^jS_{-2}(t)\|_{\mathcal L(\mathcal X_{-2},H^2(G))}
\leq C_\tau,\qquad 0<\tau\leq T.
\end{align}
Write \(K_y(t)=k(t,\cdot,y)=S_{-2}(t)\delta_y\) for the continuous interior representative of the heat kernel. The Gaussian bound in \cite[Theorems~4.4 and~6.1]{Daners2000HeatKernel} and spatial integration give, for \(0<t\leq2T\),\vspace{-2mm}
\begin{align}
&|k(t,\xi,y)| \leq Ct^{-n/2}\exp\biggl(-\frac{|\xi-y|^2}{ct}\biggr),
\label{eq:gaussian-kernel}\\
&\sup_{y\in G}\|K_y(t)\|_{L^2(G)} \leq Ct^{-n/4}.
\label{eq:global-delta-smoothing}\vspace{-2mm}
\end{align}

Let \(u_{\rm d}\) denote the deterministic source term in \eqref{eq:mild}. Since \(n\leq3\), \eqref{eq:global-delta-smoothing} and Young's inequality give \(u_{\rm d}\in L^2(0,T;L^2(G))\). By semigroup duality, it satisfies the Neumann transposition identity used in \cite[Section~3]{HuangJinKianTriki2026}; uniqueness then identifies the two solutions. The same identification applies to \eqref{eq:mean-equation}.

Assume Condition~\ref{con1} holds. The deterministic off-source regularity follows from the proof of \cite[Theorem~3.1]{HuangJinKianTriki2026}. It remains to establish the full-time boundary bound for the stochastic term. Enclose the sources in a compact set \(\mathcal C_{\rm src}\Subset G\). Choose \(\chi\in C^\infty(\overline G)\) with \(0\leq\chi\leq1\), equal to one near \(\partial G\) and zero near \(\mathcal C_{\rm src}\), and set \(\mathbf d_\chi=\operatorname{dist}(\operatorname{supp}\chi,\mathcal C_{\rm src})>0\). From \eqref{eq:gaussian-kernel} and \eqref{eq:global-delta-smoothing}, together with the semigroup property and analytic smoothing, we have for \(y\in\mathcal C_{\rm src}\) and \(0<t\leq T\),\vspace{-2mm}
$$
\|\chi K_y(t)\|_{L^2(G)}  \leq Ct^{-n/4}\mathrm e^{-c\mathbf d_\chi^2/t},\qquad
\|\chi K_y(t)\|_{H^2(G)}  \leq C_\chi t^{-1-n/4}.\vspace{-2mm}
$$
Standard interpolation then yields\vspace{-2mm}
\begin{equation}\label{eq:point-kernel-offsource}
\sup_{y\in\mathcal C_{\rm src}}\|\chi K_y(t)\|_{H^1(G)}
\leq C_\chi t^{-(n+2)/4}\mathrm e^{-c\mathbf d_\chi^2/t},
\qquad 0<t\leq T.
\vspace{-2mm}
\end{equation}
The right-hand side of \eqref{eq:point-kernel-offsource} lies in \(L^2(0,T)\). For each diffusion source, Itô's isometry and Tonelli's theorem give\vspace{-2mm}
\[
\mathbb E\int_0^T
\Bigl\|\int_0^t\chi K_y(t-s)\gamma(s)\,dW(s)\Bigr\|_{H^1(G)}^2\,dt
\leq\|\gamma\|_{L^2(0,T)}^2
\int_0^T\|\chi K_y(r)\|_{H^1(G)}^2\,dr.\vspace{-2mm}
\]
Combining this with Young's inequality for \(u_{\rm d}\), and writing \(v=u-S(\cdot)u_0\), yields\vspace{-2mm}
\begin{equation}\label{eq:offsource-est}
\mathbb E\|\chi v\|_{L^2(0,T;H^1(G))}^2
\leq C_\chi\Bigl(\sum_{k=1}^{N_0}\|\lambda_k\|_{L^2(0,T)}^2
+\sum_{l=1}^{N_1}\|\gamma_l\|_{L^2(0,T)}^2\Bigr).
\vspace{-2mm}
\end{equation}
Since \(\chi=1\) near \(\partial G\), taking traces gives the full-time boundary bound, with a constant depending on the source-to-boundary distances. For the initial component, analytic smoothing, interpolation, and the multiplicative trace inequality give the integrable bound\vspace{-2mm}
\[
\|S(t)u_0\|_{L^2(\partial G)}^2
\leq C\|S(t)u_0\|_{L^2(G)}\|S(t)u_0\|_{H^1(G)}
\leq Ct^{-1/2}\|u_0\|_{L^2(G)}^2.\vspace{-2mm}
\]

Finally, since the sources vanish after \(T_0\), we have \(u(t)=S_{-2}(t-T_0)u(T_0)\) for \(t\geq T_0\). Thus \eqref{eq:positive-lag-smoothing} with \(\tau=T_1-T_0\), together with \eqref{eq:wellposed-est}, gives \(u\in L^2(\Omega;H^1(T_1,T;H^2(G)))\) and the terminal bound, uniformly in the source locations. Taking traces and combining the preceding bounds proves \eqref{eq:boundary-trace-estimate}.
\end{proof}

\begin{lemma}[Point-source approximation]
\label{lem:point-source-approximation}
Under Condition~\ref{con1}, let $z$ and $z_\epsilon$ solve
\eqref{eq:center-equation} and \eqref{eq:regularized-center-equation},
respectively. Then\vspace{-2mm}
\begin{equation}
\label{eq:point-source-convergence}
\lim_{\epsilon\to0}\big(\|z_\epsilon-z\|_{\operatorname{obs}}
+\|z_\epsilon(T)-z(T)\|_{L^2_{\mathcal F_T}(\Omega;H^2(G))}\big)
=0.\vspace{-1mm}
\end{equation}
Moreover, for every $v\in H^2(G;\mathbb C)$ and fixed $p\in\mathbb R$,\vspace{-2mm}
\begin{equation*}
\lim_{\epsilon\to0}\int_0^T\mathrm e^{-pt}\mathfrak D_\epsilon[v](t)\,dW(t)
=
\int_0^T\mathrm e^{-pt}\mathfrak D[v](t)\,dW(t)
\quad\text{in }L^2(\Omega;\mathbb C).\vspace{-2mm}
\end{equation*}
\end{lemma}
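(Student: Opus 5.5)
By linearity, $w_\epsilon:=z_\epsilon-z$ is the mild solution of \eqref{eq:center-equation} with Dirac masses replaced by the differences $\rho_\epsilon^{y}-\delta_y$. Concretely, for each source point $y$ the difference carries the stochastic convolution
\[
\int_0^t S_{-2}(t-s)\gamma(s)\bigl(\rho_\epsilon^{y}-\delta_y\bigr)\,dW(s).
\]
Since $\rho_\epsilon^y$ is a mollification, we have $S_{-2}(r)\rho_\epsilon^y=\rho_\epsilon^y* K_\cdot(r)$ in the source variable, that is, $\int\rho_\epsilon^y(\eta)K_\eta(r)\,d\eta$. The whole proof therefore reduces to continuity of $\eta\mapsto K_\eta(r)$ near $y$ in the relevant norms, combined with dominated convergence in $r$. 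I would argue source by source and sum the finitely many contributions.

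\textbf{Observation norm.} I fix the compact set $\mathcal C_{\rm src}$ to contain the closed $\epsilon_0$-balls around all $y_l^j$, and take the cutoff $\chi$ exactly as in the proof of \cref{prop:direct-wellposed}. For the full-time boundary term, It\^o's isometry and Tonelli give
\[
\mathbb E\int_0^T\|\chi w_\epsilon(t)\|_{H^1}^2\,dt\le\|\gamma\|_{L^2}^2\int_0^T\Bigl\|\int\rho_\epsilon^y(\eta)\chi\bigl(K_\eta(r)-K_y(r)\bigr)\,d\eta\Bigr\|_{H^1}^2\,dr.
\]
By \eqref{eq:point-kernel-offsource}, the integrand in $r$ is dominated by $4C_\chi^2r^{-(n+2)/2}\mathrm e^{-2c\mathbf d_\chi^2/r}$, which is integrable. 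For each fixed $r>0$ it tends to zero, because $\eta\mapsto \chi K_\eta(r)\in H^1(G)$ is continuous on $\mathcal C_{\rm src}$. I would justify this continuity by writing $K_\eta(r)=S(r/2)K_\eta(r/2)$ and using the Gaussian kernel bounds together with interior H\"older continuity of the kernel in $\eta$; alternatively, by duality, $\eta\mapsto S_{-2}(r)\delta_\eta$ is continuous into $H^2$ since $\eta\mapsto\delta_\eta$ is continuous into $\mathcal X_{-2}$, because $H^2\hookrightarrow C^{0,1/2}$ for $n\le3$. The second route is cleaner, and I would use it. Dominated convergence then gives the $L^2_{\mathbb F}(0,T;L^2(\partial G))$ convergence of traces.

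\textbf{Late-time part.} For $t\ge T_0$, Condition~\ref{con1} gives $w_\epsilon(t)=S_{-2}(t-T_0)w_\epsilon(T_0)$. By \eqref{eq:positive-lag-smoothing}, it suffices to show $\mathbb E\|w_\epsilon(T_0)\|_{\mathcal X_{-2}}^2\to0$. It\^o's isometry bounds this quantity by
\[
\|\gamma\|_{L^2}^2\sup_{r\le T_0}\|S_{-2}(r)\|^2\,\|\rho_\epsilon^y-\delta_y\|_{\mathcal X_{-2}}^2 .
\]
The last factor tends to zero: by duality,
\[
\|\rho_\epsilon^y-\delta_y\|_{\mathcal X_{-2}}\le\sup_{\|\phi\|_{H^2}\le C}\Bigl|\int\rho_\epsilon^y(\phi-\phi(y))\Bigr|\le C\epsilon^{1/2},
\]
using the H\"older bound for $H^2$ functions. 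This yields convergence in $L^2(\Omega;H^1(T_1,T;H^2(G)))$, hence of the $\Gamma$-terms of $\|\cdot\|_{\operatorname{obs}}$ and of $z_\epsilon(T)$ in $L^2_{\mathcal F_T}(\Omega;H^2(G))$.

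\textbf{Stochastic integrals.} By It\^o's isometry, the $L^2(\Omega)$ error equals
\[
\Bigl\|\mathrm e^{-p\cdot}\sum\pm\gamma\,\bigl((\rho_\epsilon^y,v)-v(y)\bigr)\Bigr\|_{L^2(0,T)}\le C_p\sum\|\gamma\|_{L^2}\,\bigl|(\rho_\epsilon^y,v)-v(y)\bigr|,
\]
and each term tends to zero because $v\in H^2\subset C(\overline G)$.

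The main obstacle is the full-time boundary convergence near $t=s$. There the kernel difference is not small uniformly in $r$, so I rely on the off-source Gaussian decay \eqref{eq:point-kernel-offsource} as an integrable majorant that is uniform in $\eta\in\mathcal C_{\rm src}$, and on the duality continuity argument to obtain pointwise convergence in $r$.
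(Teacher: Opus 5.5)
Your proposal is correct and follows essentially the same route as the paper's proof. The shared steps are: the bound $\|\rho_\epsilon^y-\delta_y\|_{\mathcal X_{-2}}\le C\epsilon^{\alpha}$ combined with \eqref{eq:positive-lag-smoothing} and It\^o's isometry for the late-time and terminal terms; dominated convergence with the uniform off-source majorant \eqref{eq:point-kernel-offsource}, plus pointwise-in-time convergence from smoothing (your duality continuity of $\eta\mapsto S_{-2}(r)\delta_\eta$ is the same mechanism), for the full-time boundary trace; and It\^o's isometry with continuity of $v$ for the stochastic integrals.
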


\begin{proof}
For the mollifiers in \eqref{eq:source-mollifier}, Sobolev embedding \(H^2(G)\hookrightarrow C^{0,\alpha}(\overline G)\) for some \(\alpha>0\) yields\vspace{-2mm}
\[
\|\rho_\epsilon^y-\delta_y\|_{\mathcal X_{-2}}
\leq C\epsilon^\alpha \longrightarrow 0.\vspace{-2mm}
\]
Since the sources vanish after \(T_0\), the smoothing estimate \eqref{eq:positive-lag-smoothing} with \(\tau=T_1-T_0\), together with Itô's isometry, gives
\(z_\epsilon\to z\) in \(L^2(\Omega;H^1(T_1,T;H^2(G)))\) and
\(z_\epsilon(T)\to z(T)\) in \(L^2(\Omega;H^2(G))\).

For the full-time boundary trace, fix a compact set \(\mathcal C_{\rm src}\Subset G\) containing all mollifier supports for sufficiently small \(\epsilon\), and choose a cutoff \(\chi\) as above. The kernel representation gives\vspace{-2mm}
\[
S(t)\rho_\epsilon^y
=\int_{\mathcal C_{\rm src}}K_\eta(t)\rho_\epsilon^y(\eta)\,d\eta.\vspace{-2mm}
\]
Since the mollifiers are nonnegative with unit mass, Minkowski's inequality and \eqref{eq:point-kernel-offsource} bound \(\|\chi S(t)\rho_\epsilon^y\|_{H^1(G)}\) uniformly in \(\epsilon\) by the same \(L^2(0,T)\) majorant. At each fixed \(t>0\), \eqref{eq:positive-lag-smoothing} gives \(S(t)\rho_\epsilon^y\to K_y(t)\) in \(H^2(G)\). Dominated convergence therefore yields\vspace{-2mm}
\[
\lim_{\epsilon\to0}\int_0^T\|\chi(S(t)\rho_\epsilon^y-K_y(t))\|_{H^1(G)}^2\,dt=0.\vspace{-2mm}
\]
Applying It\^o's isometry gives \(\chi(z_\epsilon-z)\to0\) in \(L^2(\Omega\times(0,T);H^1(G))\). The trace theorem, together with the late-time and terminal convergence, proves \eqref{eq:point-source-convergence}.

Finally, the same Sobolev embedding gives
\((\rho_\epsilon^{y_l^j},v)_{L^2(G)}\to v(y_l^j)\), and hence
\(\mathfrak D_\epsilon[v]\to\mathfrak D[v]\) in \(L^2(0,T)\).
It\^o's isometry, combined with the bounded weight \(\mathrm e^{-pt}\), gives the remaining convergence.
\end{proof}

\vspace{-5mm}

\bibliographystyle{siam-paper-no}
\bibliography{bibitems}

\end{document}